\newcounter{RomanNumber}
\newcommand{\MyRoman}[1]{\setcounter{RomanNumber}{#1}\Roman{RomanNumber}}
\newcommand{\qqed}{\hfill\Box}
\newtheorem{theorem}{Theorem}[section]
\newtheorem{lemma}[theorem]{Lemma}
\theoremstyle{definition}
\newtheorem{proposition}[theorem]{Proposition}
\newtheorem{corollary}[theorem]{Corollary}
\theoremstyle{remark}
\newtheorem{remark}[theorem]{Remark}
\theoremstyle{notation}
\numberwithin{equation}{section}
\begin{document}

\title{Homotopy of gauge groups over high dimensional manifolds}

\author{Ruizhi Huang}
\address{Institute of Mathematics, Academy of Mathematics and Systems Science, Chinese Academy of Sciences, Beijing, China, 100190}
\urladdr{https://sites.google.com/site/hrzsea/}

\email{huangrz@amss.ac.cn}
%    \thanks will become a 1st page footnote.
\thanks{The author was supported by Postdoctoral International Exchange Program for Incoming Postdoctoral Students under Chinese Postdoctoral Council and Chinese Postdoctoral Science Foundation.
He was also supported in part by Chinese Postdoctoral Science Foundation (Grant nos. 2018M631605 and 2019T120145), and National Natural Science Foundation of China (Grant no. 11801544). }

%    General info
\subjclass[2010]{primary 
55P15, %Classification of homotopy type
55P40, %Suspensions
54C35; %Function spaces
secondary
%55P62, %Rational homotopy theory
55R25, %Sphere bundles and vector bundles
57R19, %Algebraic topology on manifolds
57S05, %Topological properties of groups of homeomorphisms or diffeomorphisms
%58B05, %Homotopy and topological questions
%81T13, %Yang-Mills and other gauge theories
}

\keywords{}
\numberwithin{theorem}{section}
\begin{abstract}
The homotopy theory of gauge groups has received considerable attention in recent decades. In this work, we study the homotopy theory of gauge groups over some high dimensional manifolds. To be more specific, we study gauge groups of bundles over $(n-1)$-connected closed $2n$-manifolds, the classification of which was determined by Wall and Freedman in the combinatorial category. We also investigate the gauge groups of the total manifolds of sphere bundles based on the classical work of James and Whitehead. Furthermore, other types of $2n$-manifolds are also considered. In all the cases, we show various homotopy decompositions of gauge groups. The methods are combinations of manifold topology and various techniques in homotopy theory.
\end{abstract}

\maketitle
%\tableofcontents

\section{Introduction}
Let $G$ be a topological group and $P$ be a principal $G$-bundle over a space $X$. The \textit{gauge group} of $P$, denoted by $\mathcal{G}(P)$, is the group of $G$-equivariant automorphisms of $P$ that fix $X$. The topology of gauge groups and their classifying spaces plays a crucial role in mathematical physics and the geometry of manifolds. There is the remarkable theory of Donaldson \cite{Donaldson86} who applied the topology of gauge groups to study the differential structures of $4$-manifolds. From an algebraic topological point of view, Cohen and Milgram \cite{CM94} pointed out that the main question is to understand the homotopy types of gauge groups and related moduli spaces of connections. 

The homotopy theory of gauge groups has received considerable attention in recent decades.  
A fundamental result of Crabb and Sutherland \cite{CS00} claims that though there may be infinitely many isomorphism classes of principal $G$-bundles over $X$, their gauge groups have only finitely many distinct homotopy types. The explicit classifications of homotopy types of gauges groups were investigated particularly for $S^4$ by Kono \cite{Kono91}, Hasui-Kishimoto-Kono-Sato \cite{HKKS16}, Theriault \cite{Theriault17}, etc, and more generally for $4$-manifolds by Theriault \cite{Theriault10}, So \cite{So16} and So-Theriault \cite{ST19}. Moreover, Theriault \cite{Theriault10} and West \cite{West17} studied the homotopy decompositions of gauge groups over surfaces. In  higher dimensions, the author \cite{Huang19} studied the homotopy of gauge groups for non-simply connected $5$-manifolds, Membrillo-Solis \cite{MS19} for $7$-dimensional manifolds, Hamanaka-Kono \cite{HK07} for $S^6$ and Hamanaka-Kaji-Kono \cite{HKK08} for $S^8$. Additionally, Kishimoto-Kono-Tsutaya \cite{KKT13, KKT14} studied the general cases of even dimensional spheres. 

In this paper, we study the homotopy types of gauge groups over some high dimensional manifolds. We are mainly concerned with manifolds of the following three types:
\begin{itemize}[itemindent=11pt]
\item[\textbf{\qquad Type A:}] (Wall \cite{Wall62}) $(n-1)$-connected closed oriented {\it combinatorial} $2n$-manifolds in the sense of (page 181, \cite{Wall62});
\item[\textbf{Type B:}] (James-Whitehead \cite{JW54}) The total spaces of oriented sphere bundles of real vector bundles over spheres with cross sections;  
\item[\textbf{Type C:}] Other highly connected closed oriented $2n$-manifolds.
\end{itemize}

The basic idea to study the homotopy types of gauge groups over these manifolds, as applied in the work of Theriault \cite{Theriault10} and So \cite{So16}, is to decompose them into smaller and simpler pieces. It is a classical result that the isomorphism classes of $G$-principal bundles over a compact manifold $M$ are classified by the set of homotopy classes of classifying maps $[M, BG]$. Hence we may also denote the gauge group $\mathcal{G}(P)$ of $P$ with $[P]=\alpha\in [M, BG]$ by $\mathcal{G}_\alpha(M)$ to emphasize the base manifold $M$. By \cite{Gottlieb72} or \cite{AB83}, there is a homotopy equivalence
\[
B\mathcal{G}_\alpha(M)\simeq {\rm Map}_\alpha(M, BG)
\]
between the classifying space of $\mathcal{G}_\alpha(M)$ and the connected component of free mapping space ${\rm Map} (M, BG)$ that contains the classifying map representing $\alpha$. Hence from now on we do not distinguish these two spaces since we only care about their homotopy types.
Based on Proposition \ref{Gsplit} as a generalisation of the results of Theriault \cite{Theriault10} and So \cite{So16}, the relations between the homotopy types of the gauge groups and of the base manifolds can be summarised in the following slogan:
\begin{quote}
\textit{The homotopy type of the suspension of the base manifold controls the homotopy types of gauge groups, and moreover, the wedge decomposition of the suspended manifold leads to a product decomposition of gauge groups.
\textit}
\end{quote}
We now turn to discuss the three cases. For Type $A$, the classification of $(n-1)$-connected closed oriented $2n$-manifolds was studied by Wall \cite{Wall62}. Let $M$ be such a manifold of \textit{rank $m$}, i.e., $H^n(M; \mathbb{Z})\cong \oplus_{i=1}^{m}(\mathbb{Z})$. Wall studied the triple $(H^n(M;\mathbb{Z}), I_{M}, \beta_M)$ as the combinatorial invariant of $M$, where $I_{M}$ is the intersection form $I_M: H^n(M)\otimes H^n(M)\rightarrow \mathbb{Z}$, and 
$\beta_{M}: H^n(M;\mathbb{Z})\rightarrow \pi_{n-1}(SO(n))$
is a function defined by first realizing the cohomology classes of $H^n(M;\mathbb{Z})$ by embeddings of $n$-spheres in $M$ and then by taking the clutching functions of the normal bundles of the $n$-spheres in $M$. In general $\beta_{M}$ is not a homomorphism due to the complexity of the intersection form. However, we can show that the composition  
\[
\chi: H^n(M)\stackrel{\beta_M}{\rightarrow}  \pi_{n-1}(SO(n))\stackrel{J}{\rightarrow} \pi_{2n-1}(S^n)\stackrel{\Sigma}{\rightarrow} \pi_{n-1}(\mathbb{S})
\]
is a homomorphism, where $J$ is the classical $J$-homomorphism and $\mathbb{S}$ is the sphere spectrum. 
This homomorphism $\chi$ determines the homotopy structure of $\Sigma M$. Indeed, it is clear that ${\rm Im}\chi$ is a subgroup of ${\rm Im}\mathbb{J}$, the image of the stable $J$-homomorphism. The famous work of Adams \cite{Adams66} and Quillen \cite{Quillen71} determines ${\rm Im}\mathbb{J}$ (Theorem \ref{AdamsQuillen}), based on which we can determine the homotopy classification of $\Sigma M$ parametrized by a single index. We then deduce our first main result on homotopy decompositions of gauge groups, which generalizes the corresponding decomposition of Theriault \cite{Theriault10} for the case when $n=2$. Basically we see that the homotopy type of a gauge group is completely determined by the index.

\begin{theorem}\label{generalMn-12nintro}
Let $M$ be an $(n-1)$-connected closed oriented $2n$-manifold ($n\geq 2$) of rank $m$.
Let $G$ be a connected topological group with $\pi_{n-1}(G)\cong \pi_{n}(G)=0$.
Define the number ${\rm ind}(M)$ to be the index of the subgroup ${\rm Im}\chi$ in ${\rm Im}\mathbb{J}$ when ${\rm Im}\chi$ is not the trivial group, or $0$ otherwise.
Then we have the following homotopy equivalences for any $\alpha\in \pi_{2n-1}(G)\cong [M, BG]$,
\begin{itemize}
\item if $n\equiv 3, 5, 6, 7~{\rm mod}~8$,
\[\Sigma M\simeq S^{2n+1} \vee\bigvee_{i=1}^{m} S^{n+1}, ~~~ \ \ \mathcal{G}_\alpha(M)\simeq \mathcal{G}_\alpha(S^{2n})\times \prod_{i=1}^{m}\Omega^{n}G;\]
\item if $n\equiv 1, 2~{\rm mod}~8$, and $m\geq 2$,
\[\Sigma M\simeq \Sigma X_1\vee \bigvee_{i=1}^{m-1} S^{n+1},~~~ \ \ \mathcal{G}_\alpha(M)\simeq \mathcal{G}_\alpha(X_1)\times \prod_{i=1}^{m-1}\Omega^{n}G,\]
where $X_1=S^n\cup_ge^{2n}$ is such that $\Sigma g \equiv {\rm ind}(M)~{\rm mod}~2$.
If further we localize away from $2$,
\[\Sigma M\simeq S^{2n+1} \vee\bigvee_{i=1}^{m} S^{n+1}, ~~~ \ \ \mathcal{G}_\alpha(M)\simeq \mathcal{G}_\alpha(S^{2n})\times \prod_{i=1}^{m}\Omega^{n}G;\]
\item if $n=4s$, and $m\geq 2$,
\[\Sigma M\simeq \Sigma X_2\vee \bigvee_{i=1}^{m-1} S^{n+1},~~~ \ \ \mathcal{G}_\alpha(M)\simeq \mathcal{G}_\alpha(X_2)\times \prod_{i=1}^{m-1}\Omega^{n}G,\]
where $X_2=S^n\cup_ge^{2n}$ is such that $\Sigma g \equiv {\rm ind}(M)~{\rm mod}~d_s$, and $d_s$ is the denominator of $B_s/4s$ for $B_s$ the $s$-th Bernoulli number defined by 
\[\frac{z}{e^z-1}=1-\frac{1}{2}z-\sum\limits_{s\geq 1} B_s \frac{z^{2s}}{(2s)!}.\] 
\end{itemize}
\end{theorem}

For Type $B$, we consider the gauge groups over the total space of a spherical bundle over a sphere
\[S^q\stackrel{i}{\longrightarrow} E\stackrel{\pi}{\longrightarrow} S^n,\]
which admits a cross section, i.e., there exists a map $s: S^n\rightarrow E$ such that $\pi \circ s={\rm id}$.
It is known that $E$ has three cells as a $CW$ complex.
Suppose further that $(E,\pi)$ is the sphere bundle of some oriented real vector bundle. The homotopy types, or even the fibrewise homotopy types, of such bundles were classified by James and Whitehead \cite{JW54} via constructions using the $J$-homomorphism and the non-exact $EHP$-sequence. The first observation is that, due to the existence of a cross section, the clutching function $\zeta\in \pi_{n-1}(SO(q+1))$ of $(E,\pi)$ is the image of some class $\xi\in \pi_{n-1}(SO(q))$ through the natural inclusion $i: SO(q)\hookrightarrow SO(q+1)$. Then it can be showed that $\Sigma E\simeq S^{n+1} \vee {\rm Th}(E)$, 
where ${\rm Th} (E)$ is the Thom space of the bundle and 
${\rm Th} (E)\simeq \Sigma (S^q\cup_{J(\xi)} e^{q+n})$.
We can now state our second main result.
\begin{theorem}\label{spheregaugeintro}
Let 
$S^q\stackrel{i}{\longrightarrow} E\stackrel{\pi}{\longrightarrow} S^n$
be the sphere bundle of an oriented real vector bundle
which admits a cross section. Let $G$ be a connected topological group such that $\pi_{n-1}(G)=0$.
Then if $n$ and $q\geq 2$ we have a homotopy equivalence for any $\alpha\in [E, BG]$
\[\mathcal{G}_\alpha(E)\simeq \mathcal{G}_\alpha(S^q\cup_{J(\xi)} e^{q+n})\times \Omega^n G.\]

If further $\pi_{q-1}(G)=0$ and $J(\zeta)=0$, then when $n\leq 2q-1$
\[\mathcal{G}_\alpha(E)\simeq \mathcal{G}_\alpha(S^{q+n})\times \Omega^n G\times \Omega^q G.\]
\end{theorem}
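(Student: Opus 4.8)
The plan is to obtain both equivalences as two successive applications of the splitting principle of Proposition~\ref{introgeneral} (and of the method behind it), reading each product factor off a wedge summand of $\Sigma E$. The two structural inputs are the cofibre sequence $S^n\xrightarrow{s}E\xrightarrow{j}X$ determined by the cross-section, whose homotopy cofibre is the two-cell complex $X=S^q\cup_{J(\xi)}e^{q+n}$, together with the suspension splitting $\Sigma E\simeq S^{n+1}\vee\mathrm{Th}(E)$ and the identification $\mathrm{Th}(E)\simeq\Sigma X$ recorded above.

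For the first equivalence I would take $M=E$, $Y=S^n$ and $\phi=s$ in Proposition~\ref{introgeneral}. The vanishing $[S^n,BG]=\pi_{n-1}(G)=0$ is the standing hypothesis, and the summand $S^{n+1}=\Sigma s(S^n)$ of $\Sigma E$ shows that $\Sigma s$ has a left homotopy inverse; in fact $\pi\circ s\simeq\mathrm{id}$ already provides an honest left inverse $\pi$ of $s$, so the restriction $s^\ast$ on free mapping spaces is split by $\pi^\ast$ and the decomposition follows from the single condition $\pi_{n-1}(G)=0$. Since the homotopy cofibre of $s$ is $X$, Proposition~\ref{introgeneral} gives
\[
\mathcal{G}_\alpha(E)\simeq\mathcal{G}_\alpha\bigl(S^q\cup_{J(\xi)}e^{q+n}\bigr)\times\mathrm{Map}^\ast_0(S^n,G),
\]
and $\mathrm{Map}^\ast_0(S^n,G)=\Omega^nG$, which is the first assertion.

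For the second equivalence I would iterate, now splitting off the bottom cell of $X$ along the cofibre sequence $S^q\xrightarrow{\iota}X\to S^{q+n}$. The reducibility hypothesis says exactly that the top homology class of $\mathrm{Th}(E)\simeq\Sigma X$ is spherical; as $\Sigma X=S^{q+1}\cup_{\Sigma J(\xi)}e^{q+n+1}$ is a two-cell suspension, this forces $\Sigma J(\xi)\simeq\ast$ and hence $\Sigma X\simeq S^{q+1}\vee S^{q+n+1}$, so that $\Sigma\iota$ is the inclusion of the $S^{q+1}$ summand and in particular admits a left homotopy inverse. Together with $[S^q,BG]=\pi_{q-1}(G)=0$ and the identification of the cofibre $X/S^q\simeq S^{q+n}$, the splitting principle should yield $\mathcal{G}_\alpha(X)\simeq\mathcal{G}_\alpha(S^{q+n})\times\Omega^qG$, and substituting into the first equivalence produces
\[
\mathcal{G}_\alpha(E)\simeq\mathcal{G}_\alpha(S^{q+n})\times\Omega^nG\times\Omega^qG.
\]

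The main obstacle lies in this second step. Proposition~\ref{introgeneral} is stated for a manifold, whereas $X$ is only the two-cell complex $S^q\cup_{J(\xi)}e^{q+n}$; moreover, since $\Sigma J(\xi)\simeq\ast$ need not imply $J(\xi)\simeq\ast$ outside the narrow range $n\le q-1$, the complex $X$ generally does \emph{not} split as $S^q\vee S^{q+n}$, so there is no honest section of the pinch $X\to S^{q+n}$ to trivialize the restriction map as in the first step. I would therefore isolate from the proof of Proposition~\ref{introgeneral} the part that uses only the suspension splitting: via the adjunction $\mathrm{Map}^\ast(X,G)\simeq\mathrm{Map}^\ast(\Sigma X,BG)$ the reducibility of $\Sigma X$ already splits the \emph{based} mapping space as $\Omega^qG\times\Omega^{q+n}G$, and the real task is to transport this through the evaluation fibration $\mathrm{Map}^\ast_\alpha(X,BG)\to\mathrm{Map}_\alpha(X,BG)\to BG$ into a splitting of the free mapping space. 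This is precisely where I expect the metastable hypothesis $n\le 2q-1$ to enter: the vanishing $\pi_{q-1}(G)=0$ trivializes the bottom-cell component of the connecting map of the evaluation fibration, so that the factor $\Omega^qG=\mathrm{Map}^\ast_0(S^q,G)$ splits off, while within the range $n\le 2q-1$ the attaching data is controlled enough to identify $[X,BG]\cong[S^{q+n},BG]=\pi_{q+n-1}(G)$ compatibly, so that the evaluation reassembles the top-cell part into $\mathcal{G}_\alpha(S^{q+n})$ carrying the same labelling class $\alpha$. Checking this compatibility of components and of $\alpha$ across the gauge groups of $E$, of $X$ and of $S^{q+n}$ is the delicate bookkeeping that I anticipate to be the crux of the argument.
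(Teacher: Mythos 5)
Your first equivalence is exactly the paper's argument for Theorem \ref{spheregauge}: run the splitting of Proposition \ref{Gsplit} with $Y=S^n$ and $\phi=s$, using $[S^n,BG]=\pi_{n-1}(G)=0$, the splitting $\Sigma E\simeq S^{n+1}\vee \Sigma X$, and the James--Whitehead identification $X\simeq S^q\cup_{J(\xi)}e^{q+n}$ of (\ref{perfectsimeq}). Your second equivalence, however, takes a genuinely different route from the paper's, and it is a valid one. The paper (Lemma \ref{spheresplit3}, after James--Whitehead) upgrades reducibility to an \emph{unsuspended} equivalence $E\simeq S^q\times S^n$: reducibility gives $J(\zeta)=0$, G.~W.~Whitehead's Hopf-invariant theorem gives $E^{q+1}p_\ast(\zeta)=0$, and the metastable hypothesis $n\le 2q-1$ is exactly what licenses that theorem and the desuspension to $p_\ast(\zeta)=0$ (hence a section, and a left inverse of the fibre inclusion); then Proposition \ref{Gsplit} is applied once with $Y=S^q\vee S^n$ (Corollary \ref{spheresplit3gauge}). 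You instead iterate the splitting principle staying at the suspension level: reducibility is equivalent to $J(\zeta)=\pm\Sigma J(\xi)=0$, i.e.\ $\mathrm{Th}(E)\simeq \Sigma X\simeq S^{q+1}\vee S^{q+n+1}$ --- this is the third bullet of Remark \ref{reducibleremark}; alternatively, the map $S^{q+1}\vee S^{q+n+1}\to \Sigma X$ assembled from the bottom-cell inclusion and the reducing map $\rho$ is a homology isomorphism of simply connected complexes, hence an equivalence --- so $\Sigma\iota$ admits a left homotopy inverse and the splitting applies to $X$ with $Y=S^q$, cofibre $S^{q+n}$. The paper's route buys the stronger geometric conclusion $E\simeq S^q\times S^n$ (and does not presuppose a section); your route buys economy and, notably, proves the gauge-group decomposition in the intro statement (where the section is hypothesized) \emph{without} using $n\le 2q-1$ at all, showing that hypothesis is only needed for the paper's stronger intermediate step.

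One correction to your closing paragraph: the obstacle you anticipate there is not real. The proof of Proposition \ref{Gsplit} nowhere uses the manifold structure of $M$; it uses only the cofibre sequence $Y\to M\to X$, the vanishing $[Y,BG]=0$, and a left homotopy inverse of $\Sigma\phi$, and the paper itself applies it to bare CW-complexes (Theorem \ref{generaln-12n}) and to $E$ and $X$ in the proof of Theorem \ref{spheregauge}. In particular, the absence of an honest section of the pinch $X\to S^{q+n}$ is irrelevant --- the proposition never asks for one, only for the suspension-level splitting you already have --- and the ``bookkeeping'' of components and of $\alpha$ is precisely the exact-sequence argument opening that proof, which gives the bijection $[S^{q+n},BG]\cong[X,BG]$ from $[S^q,BG]=0$ and surjectivity of $(\Sigma\iota)^\ast$. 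No analysis of the evaluation fibration over $BG$ beyond what is already in that proof is required, and $n\le 2q-1$ does not enter there.
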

The second part of the last theorem is proved based on a slightly stronger version (Lemma \ref{spheresplit3}) of a result of James-Whitehead (Theorem $1.11$ of \cite{JW54}), which deals with the triviality of the sphere bundle. 

For Type $C$, we consider the gauge groups of $E_7$-bundles over $4$-connected $12$-manifolds, and also of $E_8$-bundles over $6$-connected $16$-manifolds. In these two cases, the isomorphism classes of bundles are both classified by $\mathbb{Z}$ (Lemma \ref{Zindex}). With many homotopy theoretical techniques and computations, we can analyze the homotopy type of the suspension of such $M$, and then obtain our last theorem on the homotopy decompositions of the related gauge groups. 
\begin{theorem}\label{lasttheoremintro}
Let $M$ be an $(n-2)$-connected closed oriented $2n$-manifold with $H^{n-1}(M; \mathbb{Z})\cong \oplus_{i=1}^{m}\mathbb{Z}$ and $H^{n}(M;\mathbb{Z})=0$.
Then
\[M\simeq \big(\bigvee_{i=1}^{c}\Sigma^{n-3}\mathbb{C}P^{2}\vee\bigvee_{j=1}^{m-c}(S^{n-1}\vee S^{n+1})\big)\cup e^{2n}\]
for some non-negative number $c$ with $0\leq c\leq m$ and there are homotopy equivalences 
\begin{itemize}
\item of $E_7$-gauge groups when $n=6$
\[\mathcal{G}_k(M)\simeq \mathcal{G}_k(Z)\times \prod_{i=1}^{c-1} \Omega^3{\rm Map}^\ast(\mathbb{C}P^2, E_7) \times \prod_{i=1}^{m-c-1} \Omega^5 E_7
\times \prod_{i=1}^{m-c}\Omega^7 E_7,\]
where $Z\simeq \big(\Sigma^3\mathbb{C}P^2 \vee S^5\big)\cup_f e^{12}$ for some $f$;
\item and of $E_8$-gauge groups when $n=8$
\[\mathcal{G}_k(M)\simeq \mathcal{G}_k(Z)\times \prod_{i=1}^{c-2} \Omega^5{\rm Map}^\ast(\mathbb{C}P^2, E_8) \times \prod_{i=1}^{m-c-3} \Omega^7 E_8
\times \prod_{i=1}^{m-c-1}\Omega^9 E_8,\]
where $Z\simeq \big(\bigvee_{i=1}^{2} \Sigma^5\mathbb{C}P^2\vee \bigvee_{i=1}^{3}S^7\vee S^{9}\big)\cup_g e^{16}$ for some $g$.
\end{itemize}
If further we localize away from $2$, then in both cases we have the homotopy equivalences
\[\mathcal{G}_k(M)\simeq\mathcal{G}_k(S^{2n})\times \prod_{i=1}^{m}(\Omega^{n-1} E_i\times \Omega^{n+1} E_i),\]
where $(n,i)=(6, 7)$ or $(8, 8)$.
\end{theorem}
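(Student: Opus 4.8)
The plan is to first pin down the cell structure of $M$ and then feed the resulting suspension splitting into Proposition~\ref{introgeneral} one summand at a time. Since $M$ is $(n-2)$-connected with $H^{n-1}(M)\cong\bigoplus_1^m\mathbb{Z}$ and $H^n(M)=0$, universal coefficients together with Poincaré duality force the reduced homology to be free and concentrated in degrees $n-1$, $n+1$ and $2n$, the first two of rank $m$ and the last of rank $1$, with nothing in degree $n$. Hence $M$ admits a CW structure with $m$ cells in dimension $n-1$, $m$ cells in dimension $n+1$ and a single top cell $e^{2n}$. The attaching maps of the $(n+1)$-cells land in $\pi_n(\bigvee^m S^{n-1})\cong\bigoplus^m\pi_n(S^{n-1})$, and since $n\geq 4$ places us in the stable range where $\pi_n(S^{n-1})\cong\mathbb{Z}/2\langle\eta\rangle$, this attaching data is recorded by a matrix over $\mathbb{Z}/2$. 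Diagonalising this matrix by an automorphism of $\bigvee^m S^{n-1}$ and a reindexing of the $(n+1)$-cells, which is the content of Lemma~\ref{columnn-1}, reorganises the $(n+1)$-skeleton into $c$ copies of $\Sigma^{n-3}\mathbb{C}P^2=S^{n-1}\cup_\eta e^{n+1}$ together with $m-c$ copies of $S^{n-1}\vee S^{n+1}$, where $c$ is the $\mathbb{Z}/2$-rank of the matrix. This yields the asserted decomposition $M\simeq W\cup_\psi e^{2n}$ with $W$ the stated wedge.

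Next I would analyse the attaching map $\psi\in\pi_{2n-1}(W)$ of the top cell and the induced wedge splitting of $\Sigma M$, the goal being to isolate the components of $\psi$ that survive suspension and therefore obstruct peeling; everything orthogonal to them can be split off. Concretely, for each wedge summand $Y$ of $W$ that the suspension splitting of $\Sigma M$ exhibits as a wedge summand of $\Sigma M$, the inclusion $\phi\colon Y\hookrightarrow M$ has $\Sigma\phi$ admitting a left homotopy inverse, while the hypothesis $[Y,BG]=0$ holds by the known low-dimensional homotopy groups of $E_7$ and $E_8$ recorded in Example~\ref{examhomotopyE} (vanishing of $\pi_4$ and $\pi_6$ of $E_7$, etc., so that $[S^{n-1},BG]=[S^{n+1},BG]=0$ and likewise $[\Sigma^{n-3}\mathbb{C}P^2,BG]=0$). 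Applying Proposition~\ref{introgeneral} repeatedly then strips off one factor $\mathrm{Map}^\ast_0(Y,G)$ per peeled summand, using $\mathrm{Map}^\ast_0(\Sigma^{n-3}\mathbb{C}P^2,G)\simeq\Omega^{n-3}\mathrm{Map}^\ast(\mathbb{C}P^2,G)$, $\mathrm{Map}^\ast_0(S^{n-1},G)\simeq\Omega^{n-1}G$ and $\mathrm{Map}^\ast_0(S^{n+1},G)\simeq\Omega^{n+1}G$. What remains is the gauge group of the cofibre $Z$ obtained by collapsing the peeled summands, namely the top cell attached to the retained pieces, which produces the stated $Z$ in each case (Theorems~\ref{M1257guagethm} and~\ref{M1679guagethm}).

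For the localisation away from $2$, the Hopf map $\eta$ becomes null, so each $\Sigma^{n-3}\mathbb{C}P^2$ splits as $S^{n-1}\vee S^{n+1}$ and $W\simeq\bigvee_1^m(S^{n-1}\vee S^{n+1})$; moreover the top-cell attaching map $\psi$, built from $\eta$- and $J$-theoretic data that are $2$-local, suspends trivially away from $2$, so all $2m$ lower summands can be peeled and the residual cofibre is $S^{2n}$. This gives $\mathcal{G}_k(M)\simeq\mathcal{G}_k(S^{2n})\times\prod_1^m(\Omega^{n-1}E_i\times\Omega^{n+1}E_i)$ for $(n,i)=(6,7),(8,8)$ (Corollaries~\ref{M1257away2} and~\ref{M1679away2}).

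The main obstacle I anticipate is controlling the top-cell attaching map $\psi$ integrally: determining precisely which summands of $W$ it genuinely links requires computing the relevant part of $\pi_{2n-1}(W)$ for $n=6,8$, disentangling Whitehead products such as $[\iota_{n-1},\iota_{n+1}]$, self-Whitehead products and the unstable contributions, and matching them against the $2$-primary homotopy of $E_7$ and $E_8$. This is exactly what pins down the residual complex $Z$ and the shifts in the product ranges (for instance the $c-4$ and $m-c-3$ that appear in the $E_8$ case), and it is where the delicate homotopy-theoretic computations will concentrate.
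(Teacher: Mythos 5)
Your strategy coincides with the paper's: the $\mathbb{Z}/2$-matrix diagonalisation of the $\eta$-attaching data is exactly Lemma \ref{columnn-1}, and the peeling of suspension wedge summands through Proposition \ref{Gsplit} (after checking $[Y,BE_7]=0$ and $[Y,BE_8]=0$ from Example \ref{examhomotopyE}) is exactly how Theorems \ref{M1257guagethm} and \ref{M1679guagethm} are deduced. The genuine gap is that you never carry out the step that produces the actual content of the statement, namely the counts $c-1$, $m-c-1$, $m-c$ (for $E_7$) and $c-4$, $m-c-3$, $m-c-1$ (for $E_8$) together with the identification of the residual complex $Z$. These follow only from knowing, for $n=6$: that $\pi_{12}(\Sigma^4\mathbb{C}P^2)\cong\mathbb{Z}/2$ (in particular cyclic, so the matrix method of Section \ref{complexsec} concentrates all $\mathbb{C}P^2$-components of the suspended top attaching map into a single summand), that $\pi_{12}(S^6)\cong\mathbb{Z}/2$, and that $\pi_{12}(S^8)=0$ (so \emph{all} the $S^7$ summands peel off); and for $n=8$: that the relevant subgroup of $\pi_{16}(\Sigma^6\mathbb{C}P^2)$ has at most four cyclic summands, that $\Sigma\pi_{15}(S^7)\subseteq\pi_{16}(S^8)$ contributes three, and that $\pi_{16}(S^{10})\cong\mathbb{Z}/2$. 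The paper spends most of Section \ref{other2nsec} establishing precisely these facts, via the splitting $S^6\wedge\mathbb{C}P^2\wedge\mathbb{C}P^2\simeq S^{12}\vee\Sigma^6Z$ (Lemma \ref{suspensionsmashCP2split}), the EHP sequence (Lemma \ref{EHP}), the Blakers--Massey theorem (Lemma \ref{BMthm}), and Mukai's and Toda's computations (Lemmas \ref{n=6,n-1} and \ref{n=8n-1lemma}). You explicitly defer this step (``the main obstacle I anticipate''), so what you have is a correct outline whose quantitative core --- the part that makes the theorem more than a formal consequence of Proposition \ref{Gsplit} --- is missing.

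Two smaller points. First, your anticipated difficulty of ``disentangling Whitehead products'' is a red herring: only the \emph{suspension} of the top attaching map enters Proposition \ref{Gsplit}, and Whitehead products suspend to zero, so they never need to be analysed; the real work is the metastable computation of $\pi_{2n}$ of suspensions of $\mathbb{C}P^2$ just described. Second, in the skeleton decomposition one needs genuine column operations (realized by a self-equivalence of the wedge of $n$-spheres sourcing the attaching maps, as in the proof of Lemma \ref{columnn-1}), not merely an ``automorphism of $\bigvee^m S^{n-1}$ and a reindexing of the $(n+1)$-cells'': row operations plus permutations do not diagonalise every matrix over $\mathbb{Z}/2$, e.g. $\left(\begin{smallmatrix}1&1\\0&0\end{smallmatrix}\right)$. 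Your away-from-$2$ argument is correct in outline, but it too silently uses that all of the groups listed above are $2$-primary, which is again part of the computation you have not supplied.
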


Theorems \ref{generalMn-12nintro}, \ref{spheregaugeintro} and \ref{lasttheoremintro} are useful for the study of the homotopy exponent problem of gauge groups. Among others, a typical result (Proposition \ref{expformu4}) in Section \ref{expsec} states that under certain conditions for our manifold $M$ in the theorems
\[
{\rm exp}_p(\mathcal{G}_\alpha(M))\leq p^{l(G)+{\rm ord}(\alpha)},
\]
where $2l(G)+1$ is the largest dimension of the spheres in the rational homotopy decomposition of $G$, and 
${\rm ord}(\alpha)$ denotes the $p$-primary order of the classifying map~$\alpha$.

Let us close the introduction with some comments. 
Besides the obvious applications to the homotopy and (co)homology of gauge groups, the results in this paper also inspire several further problems. Firstly, it is natural to consider other types of manifolds. In some sense, the methods in the paper provide a general framework to study the homotopy of gauge groups over high dimensional manifolds. Secondly, it is reasonable to apply geometrical tools to study gauge groups. For instance, it is an interesting question to ask how we can trace the change of homotopy type of gauge group if we apply a surgery on the underlying manifold. Finally, for the Type $B$ case, James-Whitehead indeed determined the complete classification of sphere bundles over spheres in terms of some types of obstructions defined by the $J$-homomorphism. Hence, it is an interesting question
whether the homotopy types of the gauge groups of the total spaces can be classified in terms of these obstructions for fixed structure group $G$.

Our paper is organized as follows. 
In Section \ref{MBGsec+Gsplitsec}, we classify principal bundles over highly connected closed manifolds with structure groups having certain properties, and then prove Proposition \ref{Gsplit}.
In Section \ref{complexsec}, we consider the homotopy suspensions of $CW$-complexes with the same homologies as our Type $A$ manifolds. The methods in this section, as suggested by our slogan, are general and interesting in their own right and will be used in the rest of the paper.
In Section \ref{n-12nsec}, we prove Theorem \ref{generalMn-12nintro} based on Wall's work on $(n-1)$-connected $2n$-manifolds. We also give some examples in this section.
In Section \ref{spherebundlesec}, we prove Theorem \ref{spheregaugeintro} based on the work of James and Whitehead on sphere bundles over spheres. 
In Section \ref{other2nsec}, we prove Theorem \ref{lasttheoremintro}.
Section \ref{expsec} is devoted to applications on the homotopy exponents.

\section{Principal bundles over highly connected closed manifolds and their gauge groups}\label{MBGsec+Gsplitsec}
In this section, we consider principal bundles and their gauge groups over highly connected closed manifolds. Let $M$ be a $k$-connected closed oriented $m$-manifold, and $M_i$ be its $i$-skeleton.
\begin{lemma}\label{Gindex}
Let $G$ be a connected topological group with $\pi_{i}(G)=0$ for $k\leq i\leq m-k-1$. Then $[M, BG]\cong \pi_{m-1}(G)$.
\end{lemma}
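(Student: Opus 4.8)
The plan is to reduce $M$ to a two-term cofibration whose cofibre is a sphere, feed this into the Puppe sequence for $[-,BG]$, and kill the two resulting error terms by a connectivity argument. The whole computation rests on the translation $\pi_j(BG)\cong\pi_{j-1}(G)$, under which the hypothesis becomes $\pi_j(BG)=0$ for $k+1\le j\le m-k$.

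First I would record the cellular structure of $M$. Since $M$ is $k$-connected and satisfies Poincar\'e duality of formal dimension $m$, the reduced homology $\tilde H_i(M)$ vanishes for $1\le i\le k$ and, dually, for $m-k\le i\le m-1$, while $H_m(M)\cong\mathbb{Z}$ is generated by the orientation class. Hence $M$ admits a CW model with cells only in dimension $0$, in the middle range $[k+1,m-k-1]$, and a single top cell in dimension $m$. Writing $W:=M_{m-k-1}$ for the $(m-k-1)$-skeleton, this gives $M\simeq W\cup e^m$ together with a cofibre sequence $W\xrightarrow{i}M\xrightarrow{q}S^m$, where $W$ is $k$-connected of dimension $\le m-k-1$. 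This is the manifold-topology input; for the highly connected manifolds studied later in the paper it is supplied directly by Wall's classification.

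Applying $[-,BG]$ to the Puppe sequence then yields the exact sequence of pointed sets
\[
[\Sigma W, BG] \xrightarrow{\delta} [S^m, BG] \xrightarrow{q^*} [M, BG] \xrightarrow{i^*} [W, BG],
\]
in which $[\Sigma W, BG]$ acts on $[S^m,BG]$ with orbits the fibres of $q^*$. The crux is to show that the two outer groups vanish. For this I would use that for any $k$-connected complex $A$ of dimension $d$, the Postnikov truncation $BG\to\tau_{\le d}BG$ induces a bijection $[A,BG]\cong[A,\tau_{\le d}BG]$ (the map is $(d+1)$-connected and $\dim A=d$); moreover, when $\pi_j(BG)=0$ for $k+1\le j\le d$, the space $\tau_{\le d}BG$ has homotopy concentrated in degrees $\le k$, so $[A,\tau_{\le d}BG]=\ast$ by climbing its Postnikov tower, since $\tilde H^j(A;\pi)=0$ for $1\le j\le k$. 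Applying this with $A=W$ (where $d=m-k-1$) and with $A=\Sigma W$ (where $d=m-k$ and $A$ is even $(k+1)$-connected) gives $[W,BG]=\ast$ and $[\Sigma W,BG]=\ast$.

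Finally, $[W,BG]=\ast$ forces $q^*$ to be surjective, and the triviality of $[\Sigma W,BG]$ makes its action on $[S^m,BG]$ trivial, so the fibres of $q^*$ are singletons and $q^*$ is injective as well. Thus $q^*$ is a bijection and $[M,BG]\cong[S^m,BG]=\pi_m(BG)\cong\pi_{m-1}(G)$, the last isomorphism coming from the long exact sequence of $G\to EG\to BG$. The homotopy-theoretic steps are routine once the cofibration is in hand; I expect the main obstacle to be the first step, namely justifying the clean CW model $M\simeq W\cup e^m$ with \emph{no} cells in dimensions $m-k,\dots,m-1$, which is where Poincar\'e duality together with the handle or Morse structure of highly connected manifolds must be invoked.
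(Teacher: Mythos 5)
Your proposal is correct and takes essentially the same route as the paper: the paper likewise uses the top-cell cofibration $M_{m-1}\to M\to S^m$, identifies $M_{m-1}=M_{m-k-1}$ by Poincar\'e duality, and kills both $[M_{m-1},BG]$ and $[\Sigma M_{m-1},BG]$ using $\pi_j(BG)=0$ for $k+1\le j\le m-k$ to conclude $[M,BG]\cong[S^m,BG]\cong\pi_{m-1}(G)$. Your Postnikov-truncation justification of the vanishing and the pointed-set action argument for injectivity of $q^\ast$ merely make explicit steps the paper leaves implicit.
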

\begin{proof}
The homotopy cofibre sequence $S^{m-1}\stackrel{f}{\rightarrow} M_{m-1} \rightarrow M \rightarrow S^{m}\stackrel{\Sigma f}{\rightarrow} \Sigma M_{m-1}$
gives the the following exact sequence
\[[M_{m-1}, BG]\leftarrow [M, BG]\leftarrow [S^{m}, BG]\stackrel{(\Sigma f)^\ast}{\leftarrow} [\Sigma M_{m-1}, BG].\]
Since $M$ is $k$-connected, $M_{m-1}= M_{m-k-1}$ by Poincar\'{e} duality. Then we see $[M_{m-1}, BG]\cong [\Sigma M_{m-1}, BG]=0$. Hence the lemma follows. 
\end{proof}

Let us develop a general homotopy decomposition of gauge groups.
\begin{proposition}[cf. Proposition $2.1$ of \cite{Theriault10} and Lemma $2.3$ in \cite{So16}]\label{Gsplit}
Let $M$ be an oriented $m$-dimensional $k$-connected closed manifold, and $G$ be a connected topological group with $\pi_{i}(G)=0$ for $k\leq i\leq m-k-1$. Suppose that there exists a CW-complex $Y$ with $[Y, BG]=0$, and a map $\phi: Y\rightarrow M$ 
such that $\Sigma \phi$ admits a left homotopy inverse. Then 
\[\Sigma M\simeq \Sigma Y\vee\Sigma X,\]
where $X$ is the homotopy cofibre of $\phi$, and there is a homotopy equivalence
\[\mathcal{G}_\alpha(M)\simeq \mathcal{G}_\alpha(X)\times {\rm Map}^{\ast}(Y, G),\]
where $\alpha\in \pi_{m-1}(G)$.
\end{proposition}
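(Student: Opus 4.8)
The plan is to prove the suspension splitting first and then transport it through mapping-space fibrations down to the gauge groups. For the splitting I would suspend the defining homotopy cofibration $Y \xrightarrow{\phi} M \xrightarrow{j} X$ to a cofibration $\Sigma Y \xrightarrow{\Sigma\phi} \Sigma M \xrightarrow{\Sigma j} \Sigma X$. Writing $r\colon \Sigma M \to \Sigma Y$ for the assumed left homotopy inverse of $\Sigma\phi$ and $\nu\colon \Sigma M \to \Sigma M \vee \Sigma M$ for the suspension comultiplication, I would set
\[\Psi = (r \vee \Sigma j)\circ \nu\colon \Sigma M \longrightarrow \Sigma Y \vee \Sigma X.\]
Since $r\circ\Sigma\phi \simeq \mathrm{id}$, the map $(\Sigma\phi)_\ast$ is a split monomorphism in homology, so the cofibre sequence gives $H_\ast(\Sigma M) \cong H_\ast(\Sigma Y) \oplus H_\ast(\Sigma X)$, and $\Psi_\ast=(r_\ast,(\Sigma j)_\ast)$ realizes exactly this isomorphism. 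All three spaces are simply connected ($\Sigma Y$ is a retract of the simply connected $\Sigma M$, and $\Sigma X$ is the suspension of the connected cofibre $X$), so Whitehead's theorem promotes $\Psi$ to a homotopy equivalence.

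For the gauge groups I would work through $B\mathcal{G}_\alpha(M)\simeq\mathrm{Map}_\alpha(M,BG)$. First I would record that $j$ induces an isomorphism $j^\ast\colon[X,BG]\xrightarrow{\cong}[M,BG]$: surjectivity is immediate from $[Y,BG]=0$, while the section $r^\ast$ makes $(\Sigma\phi)^\ast$ surjective, which by exactness of the Puppe sequence forces the connecting map $[\Sigma Y,BG]\to[X,BG]$ to vanish and hence $j^\ast$ to be injective; thus $\alpha$ transports unambiguously between $M$ and $X$. Applying $\mathrm{Map}^\ast(-,BG)$ to the cofibration produces a homotopy fibration
\[\mathrm{Map}^\ast(X,BG)\xrightarrow{j^\ast}\mathrm{Map}^\ast(M,BG)\xrightarrow{\phi^\ast}\mathrm{Map}^\ast(Y,BG),\]
whose base is connected because $[Y,BG]=0$.

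Next I would loop this fibration and use the adjunctions $\Omega\,\mathrm{Map}^\ast(-,BG)\simeq\mathrm{Map}^\ast(-,\Omega BG)\simeq\mathrm{Map}^\ast(-,G)$, which identify the looped total space with the based gauge group $\mathcal{G}^\ast_\alpha(M)$, the looped fibre with $\mathcal{G}^\ast_\alpha(X)$, and, on the null component, the looped base with $\mathrm{Map}^\ast_0(Y,G)$. Under the same adjunction $r$ becomes a section $r^\ast\colon\mathrm{Map}^\ast_0(Y,G)\to\mathcal{G}^\ast_\alpha(M)$ of $\phi^\ast$. As $\mathcal{G}^\ast_\alpha(M)$ is a loop space, multiplying the fibre inclusion by this section gives a map $\mathcal{G}^\ast_\alpha(X)\times\mathrm{Map}^\ast_0(Y,G)\to\mathcal{G}^\ast_\alpha(M)$ that is an isomorphism on homotopy groups—the section splits the long exact sequence, and the H-space multiplication induces addition—hence a homotopy equivalence
\[\mathcal{G}^\ast_\alpha(M)\simeq\mathcal{G}^\ast_\alpha(X)\times\mathrm{Map}^\ast_0(Y,G).\]

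Finally I would promote this to the full gauge groups by comparing the evaluation fibrations $\mathcal{G}^\ast_\alpha(-)\to\mathcal{G}_\alpha(-)\xrightarrow{\mathrm{ev}}G$ for $M$ and for $X$, linked by $j^\ast$ over $\mathrm{id}_G$. Because $\phi^\ast\alpha=0$, the bundle is trivial over $Y$ and the factor $\mathrm{Map}^\ast_0(Y,G)$ sits inside $\mathcal{G}_\alpha(M)$ as based gauge transformations that evaluate to the identity; using the group structure of $\mathcal{G}_\alpha(M)$ the assignment $(g,h)\mapsto j^\ast(g)\cdot h$ defines a map $\mathcal{G}_\alpha(X)\times\mathrm{Map}^\ast_0(Y,G)\to\mathcal{G}_\alpha(M)$ covering $\mathrm{ev}\circ\mathrm{pr}_1$ on $G$ and restricting on fibres to the equivalence obtained above, so the five lemma finishes the argument. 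The main obstacle is precisely this last transfer: one must keep careful track of path components and of the twisting by $\alpha$, checking that the section lands in the component matching $P_\alpha$ and that, since $Y$ maps trivially to $BG$, its contribution is untwisted and singles out exactly the null component $\mathrm{Map}^\ast_0(Y,G)$ orthogonal to evaluation. Establishing this compatibility, rather than the formal product splitting, is where the genuine care lies.
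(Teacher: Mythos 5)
Your proposal is correct and takes essentially the same route as the paper: both apply $\mathrm{Map}^{\ast}(-,BG)$ to the cofibration $Y\to M\to X$, use the left homotopy inverse of $\Sigma\phi$ (via adjunction) to produce a homotopy section of the resulting fibration of mapping spaces, and use the group structure of the gauge group to turn the sectioned fibration into a product, with the component/twisting bookkeeping you flag at the end being exactly the point the paper itself defers to Theriault and So. The only organizational differences are that you prove the wedge decomposition of $\Sigma M$ explicitly (the paper merely asserts it) and that you split the based gauge groups first and then pass to $\mathcal{G}_\alpha(M)$ by comparing evaluation fibrations over $G$, whereas the paper carries out the splitting directly for the full gauge groups inside a single diagram whose columns are those same evaluation and Puppe fibration sequences.
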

\begin{proof}
The proof is similar to that of Lemma $2.3$ of \cite{So16}. The homotopy cofibre sequence $Y\stackrel{\phi}{\rightarrow} M \rightarrow X \rightarrow \Sigma Y\stackrel{\Sigma \phi}{\rightarrow} \Sigma M$
gives the the following exact sequence
\[[Y, BG]\leftarrow[M, BG]\leftarrow [X, BG]\leftarrow [\Sigma Y, BG]\stackrel{(\Sigma \phi)^\ast}{\leftarrow} [\Sigma M, BG].\]
By assumption we have $[Y, BG]=0$ and $(\Sigma \phi)^\ast$ is an epimorphism. Hence $[X, BG]\cong [M, BG]\cong \pi_{m-1}(G)$.
Then by considering the natural homotopy fibre sequence
\[
{\rm Map}^{\ast}_\beta(Z, G)\rightarrow  \mathcal{G}_\beta(Z)\rightarrow G\rightarrow {\rm Map}^{\ast}_\beta(Z, BG)\rightarrow {\rm Map}_\beta(Z, BG)\stackrel{{\rm ev}}{\rightarrow} BG
\]
for any class $\beta\in [Z, BG]$,
we have the following homotopy commutative diagram
\[
 \begin{tikzcd}
 \ast  \ar{r}  \ar{d}  & \Omega{\rm Map}^{\ast}_\alpha(M, BG) \ar[equal]{r} \ar[d]  & \Omega{\rm Map}^{\ast}_\alpha(M, BG)  \ar[d]  \\
 \mathcal{G}_\alpha(X) \ar{r}  \ar[equal]{d}    & \mathcal{G}_\alpha(M)  \ar{r} \ar{d} & {\rm Map}^{\ast}(\Sigma Y, BG)  \ar[d] \ar[bend right, dashed]{u}[swap]{\iota} \\
  \mathcal{G}_\alpha(X) \ar{r}  \ar{d}    & G \ar{r} \ar{d} & {\rm Map}^{\ast}_\alpha(X, BG) \ar{d}\\
  \ast  \ar{r}   &   {\rm Map}^{\ast}_\alpha(M, BG) \ar[equal]{r}   &  {\rm Map}^{\ast}_\alpha(M, BG)  
 \end{tikzcd}
\]
where the rows and columns are homotopy fibre sequence, and the map $\iota$ is a right homotopy inverse. Hence, the fibre sequence in the second row splits and the lemma follows.
\end{proof}

\section{Suspension splittings of certain complexes and their gauge groups}\label{complexsec}
In this section, we prove a suspension splitting of a complex $Z$ with $H_\ast(Z;\mathbb{Z})\cong H_\ast(M; \mathbb{Z})$, where $M$ is an $(n-1)$-connected closed oriented $2n$-manifold of rank $m$ ($m\geq 2$), that is, $H_2(M;\mathbb{Z})\cong \oplus_{i=1}^m \mathbb{Z}$. We consider this general case first rather than the special one for manifolds, since the method in this section does not depend on the manifold structure, and then may be of independent interest. In particular, it will be used often in the remaining sections of the paper.

First notice that $Z$ admits a cell decomposition $Z\simeq \bigvee_{i=1}^{m} S^{n}\cup_{f} e^{2n}$
with the attaching map $f$, the homotopy class of which lies in $\pi_{2n-1}(\bigvee_{i=1}^{m} S^{n})$. 
Recall that the stable homotopy groups of the sphere spectrum $\mathbb{S}$ are torsion abelian groups in positive dimensions and therefore admit decompositions of the form 
\begin{equation}\label{stablepisdeceq}
\pi_{n-1}(\mathbb{S})\cong \mathbb{Z}/k_1\oplus \mathbb{Z}/k_2\oplus \cdots\oplus \mathbb{Z}/k_r  
\end{equation}
such that $k_i|k_{i+1}$ for $1\leq i\leq r$.
By the Freudenthal suspension theorem, we have the surjective suspension morphism 
\[\Sigma: \pi_{2n-1}(\bigvee_{i=1}^{m} S^{n}) \twoheadrightarrow \bigoplus_{i=1}^{m}\pi_{n-1}(\mathbb{S}).\] 
Hence, we may write $f\simeq\sum\limits_{i=1}^{m}(a_{i,1}+a_{i,2}+\ldots +a_{i,r})+\omega$, where $\omega$ is in the kernel of $\Sigma$, and $\Sigma(a_{i,j})$ lies in the $j$-th component $\mathbb{Z}/k_j$ in the decomposition (\ref{stablepisdeceq}) of the $i$-th sphere. Notice that the choice of $a_{i,j}$ may be not unique. We may use a matrix to represent the homotopy class
\[\Sigma f = B, ~~~~ B=(b_{i,j})_{m\times r},~~~b_{i,j}=\Sigma(a_{i,j}).\]

Let $\{a, b\}\subseteq \{1, 2, \ldots m\}$. As in Lemma $2.5$ of \cite{So16}, we can define maps 
\[\phi_{a,b}: \bigvee_{i=1}^{m} S_i^{n}\rightarrow \bigvee_{i=1}^{m} S_i^{n}\]
such that its restriction on $S_a^{n}\vee S_b^{n}$ is the composition 
\[
S_a^{n}\vee S_b^{n}\stackrel{\sigma \vee {\rm id}}{\longrightarrow} S_a^{n}\vee S_a^{n}\vee S_b^{n} \stackrel{{\rm id}\vee \nabla}{\longrightarrow}  S_a^{n}\vee S_b^{n},
\]
with $\sigma$ the comultiplication and $\nabla$ the folding map, while $\phi_{a,b}$ leaves other factors fixed. 
Define $\tilde{f}_{a,b}=\phi_{a,b}\circ f$ and $ Z^\prime$ as its homotopy cofiber.
Then there is a homotopy commutative diagram
\[
 \begin{tikzcd}
S^{2n-1} \ar[equal]{d}  \ar{r}{f} & \bigvee_{i=1}^{m} S_i^{n}  \ar[r] \ar{d}{\phi_{a,b}} & Z \ar{d}{\tilde{\phi}_{a,b}}[swap]{\simeq} \\
S^{2n-1} \ar{r}{\tilde{f}_{a,b}}  & \bigvee_{i=1}^{m} S_i^{n}  \ar[r]   & Z^\prime,
 \end{tikzcd}
\]
where $\tilde{\phi}_{a,b}$ is a homotopy equivalence since it induces isomorphism on homology. It is easy to check that the effect of the map $\phi_{a,b}$ on $\Sigma f$ is an elementary row operation on $B$ given by adding the $a$-th row to the $b$-th row
\[\Sigma \tilde{f}_{a,b}= (I+E_{b,a})B,\] 
where $I$ is the identity matrix and $E_{b,a}$ is the matrix with value $1$ at $(b,a)$ and zero elsewhere. Hence, we can perform row-addition transformations on $B$ without changing the homotopy type of $Z$. Similarly, by the degree $-1$ map $-{\rm id}: S_a^{n}\rightarrow S_a^{n}$ for $1\leq a\leq m$, we can also perform row-multiplying transformations by $-1$ on $B$ without changing the homotopy type of $Z$.
Hence since the row-switching transformations are always allowed, we then can apply row-addition transformations, row-switching transformations and row-multiplying transformations by $-1$ on $B$ without changing the homotopy type of $Z$. 

There are two remarks. First, we should remember that the elements of the $j$-th column of $B$ are from $\mathbb{Z}/d_j$; second, in order to preserve the homotopy type of $Z$, the column operations are not allowed since in that case the elements under summation lie in different direct summands.
\begin{lemma}\label{trianglelemma}
With above conditions and notation, there is a choice of the attaching map $f$ for $M$ such that 
the suspension $\Sigma f$ can be represented by a matrix $B$ of the form 
\[
  \begin{pmatrix}
  e_1  & \ast  & \cdots  & \ast \\ 
  0      &e_2   & \cdots  &\ast \\
 \cdots & \cdots  & \cdots  & \cdots\\
 0        &0& \cdots        &e_r \\
0  & 0  & \cdots  &0             \\ 
  \cdots & \cdots  & \cdots  & \cdots\\ 
  0  & 0  & \cdots  &0,          
 \end{pmatrix}
 ~~~{\rm or}~~
  \begin{pmatrix}
  e_1  & \ast  & \cdots  & \ast & \cdots&\ast\\ 
  0      &e_2   & \cdots  &\ast & \cdots&\ast \\
 \cdots & \cdots  & \cdots  & \cdots& \cdots&\ast \\
 0        &0 &\cdots         &e_m& \cdots&\ast  \\  
 \end{pmatrix}
\]
depending on $r\leq m$ or $r\geq m$.
\end{lemma}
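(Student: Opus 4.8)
The plan is to prove the lemma by a column-by-column row reduction, in the spirit of computing a Hermite/row-echelon normal form, using exactly the three families of permitted operations on $B$ (row addition, row switching, and multiplication of a row by $-1$). The first thing I would record is that these operations realize the full left action of $GL_m(\mathbb{Z})$: iterating the row-addition $\phi_{a,b}$ together with negation produces every transvection $I + c\,E_{b,a}$ for $c \in \mathbb{Z}$, and these, combined with the row switches and sign changes, generate $GL_m(\mathbb{Z})$ for $m\geq 2$. Since every such operation is induced by a self-homotopy equivalence of $\bigvee_{i=1}^{m} S_i^{n}$ of the type constructed just above the statement, applying it amounts to replacing the attaching map $f$ by another legitimate choice for $M$, and so does not alter the homotopy type of $Z\simeq M$. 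Throughout, I keep in mind that the entries of the $j$-th column of $B$ are read in $\mathbb{Z}/d_j$, and that a single integral row operation reduces consistently in each column modulo its own $d_j$; there is therefore no conflict in performing one operation simultaneously across columns of different moduli.

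The technical core is a one-column reduction statement: given any sub-column $(b_{j,j}, b_{j+1,j}, \ldots, b_{m,j})^{T}$ with entries in $\mathbb{Z}/d_j$, the row operations restricted to rows $j, \ldots, m$ transform it into $(e_j, 0, \ldots, 0)^{T}$. I would prove this by lifting the entries to integers and running the Euclidean algorithm: since we can realize $x_a \mapsto x_a + c\,x_b$ for every $c \in \mathbb{Z}$ (by repeated addition, using negation to obtain the sign), repeated gcd steps together with swaps drive all but one entry to $0$ and leave the gcd of the lifts in the top slot; reducing modulo $d_j$ gives the desired $e_j$, which generates the subgroup of $\mathbb{Z}/d_j$ spanned by the original entries.

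With this in hand I run the induction on the column index $j$. Suppose columns $1, \ldots, j-1$ have already been brought into the required shape, so that in rows $j, \ldots, m$ all entries of columns $1, \ldots, j-1$ vanish. Applying the one-column reduction to column $j$ using only rows $j, \ldots, m$ introduces $e_j$ in position $(j,j)$ and zeros in positions $(j+1,j), \ldots, (m,j)$; crucially, because these operations involve only rows that are already zero in the earlier columns, the previously cleared columns are left untouched, and rows $1, \ldots, j-1$ are frozen (so the entries strictly above the diagonal survive as arbitrary $\ast$'s). Iterating until the columns or the rows are exhausted yields exactly the two displayed shapes: when $r \le m$ we process columns $1, \ldots, r$ and the bottom $m-r$ rows are forced to be identically zero, while when $r \ge m$ we process columns $1, \ldots, m$ and the trailing columns $m+1, \ldots, r$ remain as the extra block of $\ast$'s.

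The step I expect to require the most care is the one-column reduction, precisely because we are constrained to row operations over the cyclic groups $\mathbb{Z}/d_j$ with no scaling by arbitrary units and no column operations permitted; I would make sure the Euclidean reduction is phrased entirely in terms of the admissible moves and that the chosen generator $e_j$ is recorded explicitly as the gcd of the integral lifts taken modulo $d_j$. I would also point out that the divisibility relations $d_1 \mid d_2 \mid \cdots \mid d_r$ play no role in obtaining the triangular form itself, so no further hypothesis is needed here; they would only become relevant if one wished to sharpen the off-diagonal $\ast$'s into a canonical normal form.
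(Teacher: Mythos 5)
Your proof is correct and follows essentially the same route as the paper's: the paper likewise interprets the Euclidean algorithm as a sequence of row-addition transformations to bring each column to the form $(e_j,0,\ldots,0)^T$ (using the row switches and sign changes to position the gcd), and then repeats the process on the remaining rows and columns. Your extra care --- lifting the $\mathbb{Z}/d_j$ entries to integers before running Euclid, and noting that operations confined to rows $j,\ldots,m$ cannot disturb the previously cleared columns --- just makes explicit the inductive bookkeeping that the paper leaves implicit in ``repeating the process.''
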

\begin{proof}
Since the Euclidean algorithm for the great common divisor of numbers $(a_1,\ldots, a_n)$
can be interpreted as the row-addition transformations on the column vector $(a_1,\ldots, a_n)^{T}$,
we can use row-addition transformations to transform our original matrix $B$ of $\Sigma f$ to a new matrix $B_1^\prime$ such that the first column of $B_1^\prime$ is of the form $(0,\ldots, \pm e_1, \ldots,0)^T$ where $e_1$ is the great common divisor of the elements of the first column of $B$. Since we can also apply row-switching transformations and row-multiplying transformations by $-1$, we can further transform $B_1^\prime$ to $B_1$ with first column $(e_1,0, \ldots, 0)^T$. Repeating the process for the remaining rows and columns of $B_1$, we can achieve a matrix of the form as in the statement of the lemma. By our previous discussion, these transformations can be realized without changing the homotopy type of $Z$. Hence we have proved the lemma. 
\end{proof}

\begin{remark}\label{remarkgcd}
For any set $S\subset \mathbb{Z}/d$ $(d\neq 0)$, we can define \textit{the greatest common divisor} ${\rm g.c.d.}(S)$ of $S$ to be the minimum of ${\rm g.c.d.}(s_1, \ldots, s_l)$ with $(s_1, \ldots, s_l)$ running through all the representatives of $S$ in $\mathbb{Z}/d$ up to units.
In particular, the diagonal entries $e_1$ of the matrices in Lemma \ref{trianglelemma} can be chosen to be ${\rm g.c.d.}(\{b_{i,1}\}_{i=1}^{m})$. 

For instance, when $m=1$ and $S=\{s\}$ with $s\in \mathbb{Z}/d$, ${\rm g.c.d.}(S)$ is the minimum of $|x|$ among all $x$ such that the congruence class $[x]=cs$ for some unit $c\in \mathbb{Z}/d$. Indeed, it is easy to see that ${\rm g.c.d.}(S)$ is equal to the index of the subgroup generated by $s$ in $\mathbb{Z}/d$.
\end{remark}

By Lemma \ref{trianglelemma} and its proof, we can easily get the following splitting of $\Sigma Z$. 
\begin{corollary}\label{Zsplit}
Let $Z$ be an $(n-1)$-connected $CW$-complex ($n\geq 2$) with cell decomposition $Z\simeq \bigvee_{i=1}^{m} S^{n}\cup_{f} e^{2n}$.
Suppose the matrix representation $B$ of $\Sigma f$ has only $t$ non-zero columns after row operations and $t<m$. Then we have the homotopy equivalence
\[\Sigma Z\simeq \Sigma X\vee \bigvee_{i=1}^{m-t} S^{n+1},\]
where $X$ is the cofibre of the inclusion $\bigvee_{i=1}^{m-t} S^{n}\hookrightarrow Z$.  ~$\qqed$
\end{corollary}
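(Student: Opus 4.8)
The plan is to reduce the attaching matrix $B$ of $\Sigma f$ to a normal form in which $m-t$ of its rows vanish, using only the row-addition, row-switching and row-negation operations that were shown in Subsection \ref{suspensionsubsec} to be realizable by self-homotopy-equivalences of $\bigvee_{i=1}^m S^n$ (hence without altering the homotopy type of $Z$). Once this is achieved, the suspended attaching map factors through a sub-wedge of $t$ spheres, the remaining $m-t$ spheres split off as $(n{+}1)$-spheres, and the cofibre attached to the first $t$ spheres is exactly $\Sigma X$.

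For the row reduction I would run the Euclidean-algorithm procedure from the proof of Lemma \ref{trianglelemma}, but organized by non-zero columns rather than by position. Let $c_1<\cdots<c_t$ be the $t$ non-zero columns of $B$. Process them in turn: at the $k$-th stage use the allowed operations among the currently ``active'' rows to collect the entries of column $c_k$ into a single pivot row, clearing that column in all other active rows, then freeze the pivot row and remove it from the active set. Since the $k$-th reduction only combines active rows, and those already carry zero entries in the previously handled columns $c_1,\dots,c_{k-1}$, no earlier column is disturbed. After all $t$ non-zero columns are processed, the $m-t$ rows never chosen as pivots have zero entries in every non-zero column (each was cleared) and in every zero column (trivially), so they vanish identically. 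Relabelling the spheres by a permutation (a product of row-switches), I may assume the vanishing rows are the last $m-t$; call the resulting homotopy-equivalent complex $Z'$, with attaching map $\tilde f$ and $\Sigma\tilde f=B'$.

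Now I would exploit the stable range. Since $2n=2(n+1)-2$, the map $\Sigma\tilde f\colon S^{2n}\to\bigvee_{i=1}^m S^{n+1}$ lies in the stable range, so by Freudenthal (equivalently, by the absence of Whitehead-product contributions in this degree) $\pi_{2n}(\bigvee_{i=1}^m S^{n+1})\cong\bigoplus_{i=1}^m\pi_{2n}(S^{n+1})$. The vanishing of the last $m-t$ rows of $B'$ says precisely that the components of $\Sigma\tilde f$ onto $S^{n+1}_{t+1},\dots,S^{n+1}_m$ are null, so $\Sigma\tilde f$ factors up to homotopy through the inclusion $\bigvee_{i=1}^t S^{n+1}\hookrightarrow\bigvee_{i=1}^m S^{n+1}$. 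Hence the top cell of $\Sigma Z'=\bigl(\bigvee_{i=1}^m S^{n+1}\bigr)\cup_{\Sigma\tilde f}e^{2n+1}$ is attached only to the first $t$ spheres, giving
\[
\Sigma Z'\simeq\Bigl(\bigvee_{i=1}^t S^{n+1}\cup_{\Sigma\tilde f}e^{2n+1}\Bigr)\vee\bigvee_{i=1}^{m-t}S^{n+1}.
\]
Collapsing the last $m-t$ spheres realizes $X$ as the cofibre of the inclusion $\bigvee_{i=1}^{m-t}S^n\hookrightarrow Z$ (transported along $Z\simeq Z'$), and since suspension commutes with collapsing a subcomplex, $\Sigma X\simeq\bigvee_{i=1}^t S^{n+1}\cup e^{2n+1}$, which is the first summand.

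The main obstacle is the row reduction in the first step: guaranteeing that $t$ non-zero columns can be traded for exactly $m-t$ vanishing rows. Naively invoking the triangular form of Lemma \ref{trianglelemma} only produces $m-r$ zero rows ($r$ the total number of columns), which is too few when some columns vanish; the point is that a zero column requires no pivot, so the careful bookkeeping of ``active rows decreasing by one per non-zero column'' is exactly what yields $m-t$ free rows. Everything else is formal once the stable-range splitting of $\pi_{2n}(\bigvee_{i=1}^m S^{n+1})$ is in hand, and the Whitehead-product part $\omega$ of the unstable attaching map is irrelevant throughout, as it dies under suspension.
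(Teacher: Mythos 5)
Your proof is correct and follows essentially the same route as the paper, which deduces the corollary directly from Lemma \ref{trianglelemma} ``and its proof'': realizable row operations (row additions, switches, and negations) reduce the matrix $B$ to a form with $m-t$ identically zero rows, and since $\pi_{2n}\bigl(\bigvee_{i=1}^{m}S^{n+1}\bigr)\cong\bigoplus_{i=1}^{m}\pi_{2n}(S^{n+1})$ in this range, the corresponding $(n+1)$-spheres split off the suspension. Your extra bookkeeping---running the Euclidean reduction only over the non-zero columns so that zero columns do not consume pivot rows, which yields $m-t$ rather than merely $m-r$ vanishing rows---is precisely the refinement the paper leaves implicit, and it is a worthwhile clarification rather than a different method.
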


\section{Suspension splittings of highly connected manifolds and their gauge groups}\label{n-12nsec}
In this section, $M$ will be an $(n-1)$-connected closed oriented $2n$-manifold of rank $m$. We will study its  suspension splitting based on its manifold structure and then develop the homotopy decompositions of the associated gauge groups.

\subsection{Recollection of homotopy aspects of $(n-1)$-connected $2n$-manifolds}\label{attachingsubsec}
The homeomorphism types and the homotopy types of closed $(n-1)$-connected $2n$-manifolds are classified by Wall \cite{Wall62} and Freedman \cite{Freedman82}. Let us recall some information we need here (\cite{Wall62}, \cite{DW03}).

First let us describe some particular aspects of the homotopy structure of $M$ as follows. By Poincar\'{e} duality, $M$ admits a cell decomposition 
\[M\simeq \bigvee_{i=1}^{m} S^{n}\cup_{f} e^{2n}.\]
By a result of Haefliger \cite{Haefliger61} if $n\geq 3$ each homology class $z\in H_n(M)\cong \oplus_{i=1}^m\mathbb{Z}$ is presented by an embedding of an oriented $n$-sphere $S^n$ in $M$. Then we get an oriented normal bundle $\nu$ of $S^n$ as the tubular neighborhood of $S^n$ in $M$ with orientation inherited from that of $M$. The bundle $\nu$ determines a homotopy class $\alpha\in \pi_{n-1}(SO(n))$ through a clutching function, and the function $\alpha_M: H_n(M)\rightarrow \pi_{n-1}(SO(n))$ sending $z$ to $\alpha$ is well defined.
Hence we obtain a function 
\begin{equation}\label{nspacebeta}
\beta: H^n(M)\stackrel{\pi_M}{\longrightarrow}H_n(M)\stackrel{\alpha_M}{\longrightarrow} \pi_{n-1}(SO(n)),
\end{equation}
where $\pi_M$ is the isomorphism given by Poincar\'{e} duality. 
For the case when $n=2$, we may define $\beta(x)=I_M(x\otimes x)y$, where $x\in H^2(M)$, $y\in \pi_1(SO(2))\cong\mathbb{Z}$ is the class of identity, and $I_M$ is the intersection form of $M$.
In general $\beta$ is not a homomorphism.
Let $[S_i^n]\in H_n(M)\cong H_{n}(\bigvee_{j=1}^{m}S_j^n)$ be the fundamental class of the $i$-th sphere in $\bigvee_{j=1}^{m}S_j^n$. It determines a homotopy class $\alpha_i\in \pi_{n-1}(SO(n))$. Consider the composition 
\[
f_i: S^{2n-1}\stackrel{f}{\rightarrow} \bigvee_{j=1}^{m}S_j^n  \stackrel{q_i}{\rightarrow} S_i^n
\]
with $q_i$ the projection onto the $i$-th component. The homotopy class of $f_i$ is indeed the image of $\alpha_i$ under the $J$-homomorphism of Whitehead \cite{Whitehead78} $J: \pi_{n-1}(SO(n))\rightarrow \pi_{2n-1}(S^n)$.
Recall that there is a natural commutative diagram up to sign
\begin{eqnarray}\label{JSigmadiagram}
\begin{tikzcd}
\pi_{n-1}(SO(n)) \ar{r}{J}  \ar[two heads]{d}{i_\ast} & \pi_{2n-1}(S^n) \ar[two heads]{d}{\Sigma} \\
\pi_{n-1}(SO(n+1)) \ar{r}{J}  & \pi_{2n}(S^{n+1}).
\end{tikzcd}
\end{eqnarray}
The $J$-homomorphism in the second row of the diagram is indeed in the stable range, i.e., 
$\mathbb{J}: \pi_{n-1}(SO)\rightarrow \pi_{n-1}(\mathbb{S})$,
whose image is known due to famous work of Adams and Quillen.
\begin{theorem}[\cite{Adams66}, \cite{Quillen71}]\label{AdamsQuillen}
\[
{\rm Im}\mathbb{J}=\left\{\begin{array}{cc}
0 & n\equiv 3, 5, 6, 7 \ {\rm mod} \ 8, \\
\mathbb{Z}/2 & n\equiv 1, 2\ {\rm mod} \ 8, n\neq 1, \\
\mathbb{Z}/d_s& n=4s,
\end{array}\right.
\]
where $d_s$ is the denominator of $B_s/4s$ for $B_s$ the $s$-th Bernoulli number.~$\qqed$
\end{theorem}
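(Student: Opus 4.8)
The plan is to determine ${\rm Im}\,\mathbb{J}$ by first pinning down the source $\pi_{n-1}(SO)$ via Bott periodicity and then analysing how $\mathbb{J}$ acts on each periodicity class. Bott periodicity gives $\pi_{n-1}(SO)\cong 0$ when $n\equiv 3, 5, 6, 7~{\rm mod}~8$, $\pi_{n-1}(SO)\cong \mathbb{Z}/2$ when $n\equiv 1, 2~{\rm mod}~8$, and $\pi_{n-1}(SO)\cong \mathbb{Z}$ when $n=4s$. The first case is immediate, since the source vanishes and hence ${\rm Im}\,\mathbb{J}=0$ with no further work.

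For the $\mathbb{Z}/2$ cases I would show that $\mathbb{J}$ is injective, so that its image is all of $\mathbb{Z}/2$. The cleanest route is to detect the generator of the image by a mod-$2$ invariant: the real $e$-invariant, equivalently the $d$-invariant in $KO$-theory together with its Adams-operation structure, takes a nonzero value on $\mathbb{J}(\text{generator})$, forcing it to be nontrivial. Concretely, one smashes the relevant Hopf-type cofibre sequence with $KO$ and exploits the action of $\psi^{-1}$ (or $\psi^3$) to manufacture a nonvanishing $\mathbb{Z}/2$ invariant that survives into the stable stem $\pi_{n-1}(\mathbb{S})$.

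The substantive case is $n=4s$, where the source is infinite cyclic and one must prove the image is \emph{exactly} $\mathbb{Z}/d_s$, which splits into two inequalities. For the \emph{upper bound}, that ${\rm Im}\,\mathbb{J}$ is cyclic of order dividing $d_s$, I would follow Adams and use the complex $e$-invariant $e_{\mathbb{C}}\colon \pi_{4s-1}(\mathbb{S})\to \mathbb{Q}/\mathbb{Z}$ constructed from reduced $\tilde{K}$-theory and the Adams operations $\psi^k$. Evaluating $e_{\mathbb{C}}$ on $\mathbb{J}$ of a generator of $\pi_{4s-1}(SO)$ yields a rational whose denominator is precisely the denominator $d_s$ of $B_s/4s$; this is where the Bernoulli numbers enter, via the Chern-character formula for $\psi^k$ applied to the Thom class of the relevant line-bundle construction. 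For the \emph{lower bound}, that the whole of $\mathbb{Z}/d_s$ is realised, I would invoke the Adams conjecture: for $N$ large, $k^N(\psi^k-1)$ annihilates the fibrewise stable homotopy type, so the order of $\mathbb{J}(\text{generator})$ is bounded below by the order of $(\psi^k-1)$ acting on the appropriate bundle, which recovers $d_s$.

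The main obstacle is exactly this lower bound, namely the Adams conjecture, whose only known proofs, Quillen's via \'etale homotopy theory over finite fields and Sullivan's via the Galois-symmetric structure of profinite completions, lie far outside elementary obstruction theory; matching it against Adams' $e$-invariant upper bound is what pins the value to the sharp $\mathbb{Z}/d_s$. I would therefore organise the argument so that Adams' $K$-theoretic computation supplies the ceiling and Quillen's theorem supplies the floor, the two coinciding precisely at $\mathbb{Z}/d_s$.
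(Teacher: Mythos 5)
Your statement is one the paper imports verbatim from the literature: it is quoted with citations to Adams and Quillen and no proof is given in the paper, so the only meaningful comparison is against the classical argument your sketch reconstructs. At the level of architecture you have the right proof: Bott periodicity identifies $\pi_{n-1}(SO)$, which disposes of the $n\equiv 3,5,6,7 \bmod 8$ cases; injectivity of $\mathbb{J}$ on the $\mathbb{Z}/2$ groups in the $n\equiv 1,2 \bmod 8$ cases is Adams' detection via the $KO$-theoretic $d$- and $e$-invariants, essentially as you describe; and the $n=4s$ case is settled by playing an $e$-invariant computation against the Adams conjecture, the two bounds meeting at $\mathbb{Z}/d_s$.

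There is, however, a genuine error in the $n=4s$ case: you have assigned the two bounds to the wrong tools. The complex $e$-invariant is a \emph{detection} device and supplies the \emph{lower} bound: computing $e_{\mathbb{C}}(\mathbb{J}(\xi))\equiv \pm B_s/4s \bmod \mathbb{Z}$ on a generator $\xi$ of $\pi_{4s-1}(SO)\cong\mathbb{Z}$ shows the order of $\mathbb{J}(\xi)$ is \emph{at least} $d_s$; since $e_{\mathbb{C}}$ is only a homomorphism to $\mathbb{Q}/\mathbb{Z}$, a nonvanishing value can never establish the relation $d_s\cdot\mathbb{J}(\xi)=0$. Conversely, the Adams conjecture is a \emph{vanishing} device and supplies the \emph{upper} bound: fibre-homotopy triviality of $k^N(\psi^k-1)\xi$ gives $k^N(k^{2s}-1)\,\mathbb{J}(\xi)=0$ for every $k$ (note $\psi^k-1$ acts as multiplication by $k^{2s}-1$ on $\widetilde{KO}(S^{4s})\cong\mathbb{Z}$, so your phrase ``the order of $(\psi^k-1)$ acting on the bundle'' is not meaningful), whence by Adams' number-theoretic lemma the order of $\mathbb{J}(\xi)$ divides $\gcd_k k^N(k^{2s}-1)=d_s$; a trivialization statement of this kind can never show $\mathbb{J}(\xi)\neq 0$. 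As written, each half of your argument would therefore fail at the decisive step. The repair is the straight swap --- $e$-invariant for the floor, Quillen's theorem for the ceiling --- after which your outline agrees with the proofs in the cited sources \cite{Adams66, Quillen71}.
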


\subsection{Homotopy decompositions of gauge groups over $(n-1)$-connected $2n$-manifolds}
To access the gauge groups we need to analyze the homotopy type of $\Sigma M$ more carefully, based on the method in Section \ref{complexsec}. Indeed, we want to apply matrix transformations through the function $\beta$ (\ref{nspacebeta}) and $J$.

To that purpose, first recall that for the manifold $M$ we have the intersection form $I_M: H^n(M)\otimes H^n(M)\rightarrow \mathbb{Z}$ defined by $I_M(x\otimes y)=\langle xy, [M]\rangle$, where $[M]$ is the fundamental class of $M$. Then the function $\beta: H^n(M)\rightarrow  \pi_{n-1}(SO(n))$ (\ref{nspacebeta}) satisfies the relations:
\[
HJ\beta(x)= I_M(x\otimes x), \ \ x\in H^n(M),
\]
\begin{equation}\label{betasum}
\beta(x+y)=\beta(x)+\beta(y)+I_M(x\otimes y) \partial^\prime \iota_n, x, y \in H^n(M),
\end{equation}
where $H$ is the Hopf invariant, $\iota_n\in \pi_{n}(S^n)$ is given by the identity map and $\partial^\prime: \pi_{n}(S^n)\rightarrow \pi_{n-1}(SO(n))$ is the connecting homomorphism in the long exact sequence of homotopy groups for the fibration $SO(n)\stackrel{i}{\rightarrow} SO(n+1)\rightarrow S^n$.
Applying the composition of the $J$-homomorphism and the suspension $\Sigma $ to (\ref{betasum}), we have 
\begin{eqnarray*}
\Sigma J\beta(x+y)
&=&\Sigma J\beta(x)+\Sigma J\beta(y)+I_M(x\otimes y) \Sigma J\partial^\prime \iota_n\\
&=&\Sigma J\beta(x)+\Sigma J\beta(y)+I_M(x\otimes y) \Sigma (-[\iota_n,\iota_n])\\
&=&\Sigma J\beta(x)+\Sigma J\beta(y),
\end{eqnarray*}
where we used the fact that $-J\partial^\prime=[-,\iota_n]$ is a Whitehead product (e.g., see (1.3) of \cite{JW54}). This means the composition
\begin{equation}\label{chi}
\chi: H^n(M)\stackrel{\beta}{\rightarrow}  \pi_{n-1}(SO(n))\stackrel{J}{\rightarrow} \pi_{2n-1}(S^n)\stackrel{\Sigma}{\rightarrow} \pi_{n-1}(\mathbb{S})
\end{equation}
is a homomorphism. Hence the effects of matrix transformations used in Lemma \ref{trianglelemma} on $\Sigma M$ can be understood through $\chi$.
Indeed, by Theorem \ref{AdamsQuillen} and Diagram (\ref{JSigmadiagram}) we have that the image of 
$\chi=\Sigma J \beta=- \mathbb{J} i_\ast\beta$
is always a cyclic group. We then can choose a column vector $B=\{b_{i}\}_{m\times 1}$ to represent the suspension of the attaching map $f:S^{2n-1}\rightarrow \bigvee_{j=1}^{m}S_j^n$ of $M$ as in Section \ref{complexsec} such that $b_{i}\in \mathbb{Z}/d\cong {\rm Im}(\Sigma J)$ for some $d$. 
By Lemma \ref{trianglelemma} and Remark \ref{remarkgcd}, $B$ can be transformed to a vector of the form 
$(h, 0, \ldots, 0)^T$
such that $h={\rm g. c. d.}(\{b_{i}\}_{i=1}^{m})$ in the sense of Remark \ref{remarkgcd}.

To understand the transformations through $\chi$, let us write the attaching map $f:S^{2n-1}\rightarrow \bigvee_{j=1}^{m}S_j^n$ of $M$ as $f\simeq f_1+\cdots +f_m+\omega$,
where $f_i=J(\alpha_i)=J\beta([S_i^n]^\ast)$ is the $i$-th component, and $[S_i^n]^\ast$ is the dual of $[S_i^n]$ under Poincar\'{e} duality. Notice that $f_i$ corresponds to the $i$-th row of $B$. Then 
\[
\Sigma(f_a+f_b)=\chi([S_a^n]^\ast+[S_b^n]^\ast)=\chi([S_a^n]^\ast)+\chi([S_b^n]^\ast)=\Sigma f_a +\Sigma f_b,
\]
which means that the effect of an elementary row operation on $B$ by adding the $a$-th row to the $b$-th row can be achieved by the addition operation of the group $H^n(M)\cong \oplus_{i=1}^{m}\mathbb{Z}$. Hence, it follows that $h$ is indeed the index of the subgroup ${\rm Im}\chi$ in ${\rm Im}\mathbb{J}$ unless $h=0$.

We can now prove Theorem \ref{generalMn-12nintro}.
\begin{proof}[Proof of Theorem~\ref{generalMn-12nintro}] 
First by Lemma \ref{Gindex} $[M, BG]\cong \pi_{2n-1}(G)$ if $\pi_{n-1}(G)\cong \pi_{n}(G)=0$. Notice that ${\rm ind}(M)=h$ by the above discussions and Remark \ref{remarkgcd}.
The homotopy classification of $\Sigma M$ now follows from the above arguments and Theorem \ref{AdamsQuillen}, and then the corresponding splittings of gauge groups follows immediately from Proposition \ref{Gsplit}. 

To complete the proof, it remains to show that the two decompositions of gauge groups involving $X_1$ and $X_2$ in the theorem do not depend on the choices of the attaching map $g$. Indeed, let $X^{(1)}=S^n\cup_{g^{(1)}} e^{2n}$ and $X^{(2)}=S^n\cup_{g^{(2)}} e^{2n}$ such that $\Sigma g^{(1)}\simeq \Sigma g^{(2)}$. Similar to Lemma 2.12 of \cite{So16} we can consider the composition $h: S^{2n-1}\stackrel{\mu^\prime}{\longrightarrow} S^{2n-1}\vee S^{2n-1}\stackrel{g^{(1)}\vee g^{(2)}}{\longrightarrow} S^{n}\vee S^{n}$ and its homotopy cofiber $Y$, where $\mu^\prime$ is the co-multiplication. Then by Lemma \ref{trianglelemma}, $\Sigma Y\simeq \Sigma X^{(1)}\vee S^{n+1}\simeq \Sigma X^{(2)}\vee S^{n+1}$. It implies that $\mathcal{G}_\alpha (X^{(1)})\times \Omega^n G\simeq \mathcal{G}_\alpha (X^{(2)})\times \Omega^n G$ by Proposition \ref{Gsplit} .
\end{proof}

\subsection{Examples}\label{examples2n-1n} 
We specify some examples for bundles over particular manifolds and structure groups.
\subsubsection{Bundles of exceptional Lie groups}
We may consider several examples of bundles over exceptional Lie groups, which can be also studied through the stable homotopy groups of spheres directly.
Let us collect some information about the homotopy groups of spheres (\cite{Toda62}):
\[\pi_9(S^5)\cong \mathbb{Z}/2,  ~~\pi_{11}(S^6)\cong \mathbb{Z}, ~~\pi_{15}(S^8)\cong \mathbb{Z}\oplus \mathbb{Z}/120,\]
\[\pi_4(\mathbb{S})=\pi_5(\mathbb{S})=0, ~~ \pi_7(\mathbb{S})\cong \mathbb{Z}/240,\]
where $\Sigma: \pi_{15}(S^8)\twoheadrightarrow \pi_7(\mathbb{S})$ sends the generator $\sigma_8\in \mathbb{Z}$ representing the third Hopf invariant $1$ map to the generator $\sigma\in \mathbb{Z}/240$, and the generator in $\mathbb{Z}/120$ to $2\sigma$. Combining this information with Proposition \ref{Gsplit}, Corollary \ref{Zsplit}, Lemma \ref{Zindex} and the information on homotopy groups of $E_6$, $E_7$ and $E_8$, we can get the homotopy decompositions of associated gauge groups.

\begin{proposition}\label{examplen-12n}
Let $M$ be an $(n-1)$-connected closed oriented $2n$-manifold of rank $m$. We have
\begin{itemize}
\item when $n=5$ and $G=E_6$,
$\mathcal{G}_k(M)\simeq \mathcal{G}_k(S^{10})\times \prod_{i=1}^{m}\Omega^{5}E_6$; 
\item when $n=6$ and $G=E_7$, 
$\mathcal{G}_k(M)\simeq \mathcal{G}_k(S^{12})\times \prod_{i=1}^{m}\Omega^{6}E_7$; 
\item when $n=8$, $G=E_8$ and $\Sigma f$ is null-homotopic,
$\mathcal{G}_k(M)\simeq \mathcal{G}_k(S^{16})\times \prod_{i=1}^{m}\Omega^{8}E_8$;
\item when $n=8$, $G=E_8$ and $\Sigma f$ is not null-homotopic,
$\mathcal{G}_k(M)\simeq \mathcal{G}_k(X_\alpha)\times \prod_{i=1}^{m-1}\Omega^{8}E_8$, where $X_\alpha$ is a $2$-cell complex with one $8$-cell and one $16$-cell and an attaching map $\alpha\in \pi_{15}(S^8)$. ~$\qqed$
\end{itemize}
\end{proposition}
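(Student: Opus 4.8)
The plan is to obtain all three decompositions as direct specialisations of Theorem \ref{generalMn-12n}, so the first step is to verify its hypotheses for the pairs $(n,G)=(5,E_6)$, $(6,E_7)$ and $(8,E_8)$. Each $E_i$ is a connected Lie group, and Example \ref{examhomotopyE} gives $\pi_4(E_6)=\pi_5(E_6)=0$, $\pi_5(E_7)=\pi_6(E_7)=0$ and $\pi_7(E_8)=\pi_8(E_8)=0$, so in every case $\pi_{n-1}(G)\cong\pi_n(G)=0$ as required. Since $M$ is $(n-1)$-connected, hence $4$-connected, Corollary \ref{Zindex} identifies $[M,BG]\cong\pi_{2n-1}(G)\cong\mathbb{Z}$; thus $k\in\mathbb{Z}$ genuinely indexes the bundles and $\mathcal{G}_k(M)$ makes sense.

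For $n=5$ and $n=6$ I would read off the case $n\equiv 3,5,6,7\bmod 8$ of Theorem \ref{generalMn-12n}, which immediately yields $\mathcal{G}_k(M)\simeq\mathcal{G}_k(S^{2n})\times\prod_{i=1}^m\Omega^n G$ and hence the first two displayed formulas with $(2n,n)=(10,5),(12,6)$. It is clarifying to note \emph{why} no correction factor can appear: by Theorem \ref{AdamsQuillen} one has ${\rm Im}\,\mathbb{J}=0$ for $n\equiv 5,6\bmod 8$, equivalently the relevant stable stems $\pi_4(\mathbb{S})$ and $\pi_5(\mathbb{S})$ vanish. Consequently $\Sigma f\in\pi_{2n}(\bigvee_{i=1}^m S^{n+1})\cong\bigoplus_i\pi_{2n}(S^{n+1})\cong\bigoplus_i\pi_{n-1}(\mathbb{S})=0$, so $\Sigma M$ is a wedge of spheres and the invariant $e$ of (\ref{chimatrix}) is forced to be $0$.

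For $n=8$ we are in the branch $n=4s$ with $s=2$, where ${\rm Im}\,\mathbb{J}=\mathbb{Z}/240$, and Theorem \ref{generalMn-12n} gives $\mathcal{G}_k(M)\simeq\mathcal{G}_k(X_2)\times\prod_{i=1}^{m-1}\Omega^8 E_8$ with $X_2=S^8\cup_g e^{16}$ and $\Sigma g=e$. The alternative in the statement is governed by $e$. I would identify $\Sigma f$ with its Hilton-coordinate vector $(b_1,\ldots,b_m)\in\bigoplus_i\pi_{16}(S^9)\cong(\mathbb{Z}/240)^m$, whose entries are $b_i=\chi([S_i^n]^\ast)$, so that $e={\rm g.c.d.}(\{b_i\})$ by Remark \ref{remarkgcd}; hence $e=0$ precisely when every $b_i=0$, i.e. precisely when $\Sigma f$ is null-homotopic. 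If $\Sigma f$ is not null-homotopic then $e\neq 0$ and $X_2$ is a genuine two-cell complex, which is the $X_\alpha$ of the statement with $\alpha=g$ a preimage of $e$ under the surjection $\Sigma:\pi_{15}(S^8)\twoheadrightarrow\mathbb{Z}/240$, giving the second formula. If $\Sigma f$ is null-homotopic then $e=0$, so $X_2\simeq S^8\vee S^{16}$ and $\Sigma M\simeq S^{17}\vee\bigvee_{i=1}^m S^9$; applying Proposition \ref{Gsplit} with $Y=\bigvee_{i=1}^m S^8$ and $\phi$ the inclusion of the bottom wedge (cofibre $S^{16}$, suspension split), using $[\bigvee_{i=1}^m S^8,BE_8]=\bigoplus_i\pi_7(E_8)=0$, liberates all $m$ factors and produces $\mathcal{G}_k(M)\simeq\mathcal{G}_k(S^{16})\times\prod_{i=1}^m\Omega^8 E_8$.

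The one point requiring care is this last bookkeeping in the $E_8$ case: matching the two sub-cases of the $n=4s$ branch of Theorem \ref{generalMn-12n} to the null/non-null dichotomy, and checking that exactly one extra factor $\Omega^8 E_8$ is freed when $\Sigma f$ is trivial, so that the product runs to $m$ rather than $m-1$. Everything else is a mechanical verification of hypotheses from the tabulated homotopy groups of $E_6,E_7,E_8$ and the stable stems $\pi_4(\mathbb{S}),\pi_5(\mathbb{S}),\pi_7(\mathbb{S})$, together with the congruence classes of $n$ modulo $8$.
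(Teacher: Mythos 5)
Your proof is correct and takes essentially the same route as the paper: the paper's own argument simply combines Corollary \ref{Zindex}, the homotopy groups of $E_6$, $E_7$, $E_8$ from Example \ref{examhomotopyE}, the facts $\pi_4(\mathbb{S})=\pi_5(\mathbb{S})=0$ and $\pi_7(\mathbb{S})\cong\mathbb{Z}/240$ with $\Sigma:\pi_{15}(S^8)\twoheadrightarrow\pi_7(\mathbb{S})$, and the general splitting theorem (the paper cites Theorem \ref{generaln-12n}, whose role your combination of Theorem \ref{generalMn-12n} and Proposition \ref{Gsplit} fills identically, both resting on the same machinery). The only cosmetic difference is bookkeeping in the $n=8$ case: the paper tracks the dichotomy by the number $t\in\{0,1\}$ of non-zero columns of the matrix $B$, while you track it via the invariant $e$ and Remark \ref{remarkgcd}; these are the same distinction.
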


In the fourth case of Proposition \ref{examplen-12n}, the attaching map $\alpha$ of $X_\alpha$ is determined by the original $f$ and the matrix transformations of $\Sigma f$. By Theorem \ref{generalMn-12nintro}, we see that 
$\Sigma X_\alpha\simeq S^9\cup_h e^{17}$,
where $h$ is the index of the subgroup ${\rm Im}\chi$ in $\mathbb{Z}/240$ .

If further $M$ is \textit{almost parallelizable}, i.e., $M$ is parallelizable with one point deleted, then the ${\rm mod}~2$ Steenrod module structure on $H^\ast(M; \mathbb{Z}/2)$ is trivial due to the triviality of Stiefel-Whitney class of $M$ by the famous Wu formula (\cite{Wu55}, page $132$ of \cite{MS75}). Hence we have the following homotopy decomposition in this special case since $Sq^{8}$ detects $\sigma$:
\begin{proposition}\label{paraM16}
Let $M$ be a $7$-connected almost parallelizable closed oriented $16$-manifold of rank $m$. Then for the gauge group of the $E_8$-bundle over $M$ corresponding to $k\in [M, BE_8]$, we have the homotopy equivalence after localization away from $15$
\[
\hspace{3.85cm}\mathcal{G}_k(M)\simeq \mathcal{G}_k(S^{16})\times \prod_{i=1}^{m}\Omega^{8}E_8.
\hspace{3.85cm}\Box\] 
\end{proposition}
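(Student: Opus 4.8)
The plan is to specialize Theorem~\ref{generalMn-12n} to $n=8$ and $G=E_8$ (the case $n=4s$ with $s=2$, so that the relevant cyclic group is $\mathrm{Im}\,\mathbb{J}=\mathbb{Z}/240$) and to show that almost parallelizability forces the invariant $e$ of (\ref{chimatrix}) to vanish after inverting $15$. By that theorem the gauge group already splits as $\mathcal{G}_k(X_2)\times\prod_{i=1}^{m-1}\Omega^8E_8$ with $X_2=S^8\cup_g e^{16}$ and $\Sigma g=e$, so it suffices to prove that, away from $15$, the class $e\in\mathbb{Z}/240$ is zero: then $\Sigma X_2\simeq S^9\vee S^{17}$, and applying Proposition~\ref{Gsplit} to the bottom-cell inclusion $S^8\hookrightarrow X_2$ (note $[S^8,BE_8]=\pi_7(E_8)=0$) collapses $\mathcal{G}_k(X_2)$ to $\mathcal{G}_k(S^{16})\times\Omega^8E_8$, promoting the product to the asserted $m$-fold one.

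First I would reduce the vanishing of $e$ to a bundle-theoretic statement. By (\ref{chi}) together with Diagram~(\ref{JSigmadiagram}) one has $\chi=\Sigma J\beta=-\mathbb{J}\,i_\ast\beta$, where $i_\ast\colon\pi_7(SO(8))\to\pi_7(SO)$ is stabilization and $\mathbb{J}$ is the stable $J$-homomorphism, and by (\ref{chimatrix}) $e=\mathrm{min}^+\{\mathrm{Im}\,\chi\}$. Hence it is enough to check $i_\ast\beta(x)=0$ for each $x\in H^8(M)$, i.e. that the stable class of the normal bundle $\nu$ of the embedded $8$-sphere representing $x$ (which exists by Haefliger, $n\ge3$) is trivial. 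This is exactly where almost parallelizability is used: since $M\setminus\{pt\}$ is parallelizable and the embedded sphere can be isotoped off the deleted point, $TM|_{S^8}$ is trivial, and from $TS^8\oplus\nu\cong TM|_{S^8}$ and the stable triviality of $TS^8$ one deduces that $\nu$ is stably trivial. Therefore $i_\ast\beta(x)=0$, so $\chi\equiv0$ and $e=0$; this already gives $\Sigma M\simeq S^{17}\vee\bigvee_{i=1}^m S^9$ integrally, and the desired splitting follows a fortiori away from $15$.

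It is instructive to see how the Wu-theoretic picture flagged before the statement recovers the $\mathrm{mod}\,2$ shadow of this vanishing and pins down the role of the localization. Since all Stiefel--Whitney classes of the almost parallelizable $M$ vanish, the Wu formula makes the $\mathrm{mod}\,2$ Steenrod action trivial, so $Sq^8=0$ on $H^\ast(M;\mathbb{Z}/2)$; equivalently, since for $x\in H^8(M;\mathbb{Z}/2)$ one has $Sq^8x=x^2$, the intersection form is even. Through the relation $HJ\beta(x)=I_M(x\otimes x)$ this says the Hopf-invariant part of $J\beta(x)$ is even, and since $Sq^8$ detects precisely the generator $\sigma$ on the two-cell complex $S^9\cup_e e^{17}$, the Hopf-invariant-one summand of $e$ is killed. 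Inverting $15$ then removes the odd-primary summand $\mathbb{Z}/3\oplus\mathbb{Z}/5\subset\mathbb{Z}/240$ coming from the odd image of $\mathbb{J}$, leaving only the $2$-primary part to which this $Sq^8$-detection applies.

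The hard part is genuinely the behaviour at the prime $2$. The $2$-primary image of the stable $J$-homomorphism in dimension $7$ is $\mathbb{Z}/16$, and while $Sq^8$ detects its generator $\sigma$, the multiples $2\sigma,4\sigma,8\sigma$ lie in higher Adams filtration and are invisible to $Sq^8$ alone; the cohomological argument therefore only confines $\mathrm{Im}\,\chi$ to $\langle 2\sigma\rangle$ and cannot by itself force $e=0$. It is the bundle-theoretic input of the second paragraph---the stable triviality of the normal bundles that almost parallelizability guarantees---that is responsible for pushing $e$ all the way to $0$, and hence for the fully split form rather than an $X_2$ with merely even attaching degree.
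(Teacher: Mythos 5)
Your proof is correct, but it takes a genuinely different route from the paper's --- and in fact a stronger one. The paper's entire justification for Proposition~\ref{paraM16} is the cohomological argument you describe as a ``shadow'' in your third paragraph: almost parallelizability kills the Stiefel--Whitney classes, the Wu formula then trivializes the mod $2$ Steenrod action on $H^\ast(M;\mathbb{Z}/2)$, and the splitting is asserted ``since $Sq^8$ detects $\sigma$,'' with the localization away from $15$ serving only to discard the odd torsion of ${\rm Im}\,\mathbb{J}\cong\mathbb{Z}/240$. Your fourth paragraph pinpoints exactly what that argument leaves open: vanishing of $Sq^8$ (equivalently, evenness of the intersection form, via $HJ\beta(x)=I_M(x\otimes x)$) only confines ${\rm Im}\,\chi$ to $\langle 2\sigma\rangle$, and $2$-locally a complex such as $S^9\cup_{2\sigma}e^{17}$ has trivial Steenrod action yet does not split, so primary-operation detection alone does not force $e=0$ even after inverting $15$. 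Your bundle-theoretic argument supplies what is needed and more: an embedded $8$-sphere misses the deleted point, so $TM|_{S^8}\cong TS^8\oplus\nu$ is trivial, hence $\nu$ is stably trivial, hence $i_\ast\beta=0$ and $\chi=\pm\,\mathbb{J}i_\ast\beta=0$, giving $e=0$ \emph{integrally}. Consequently $\Sigma M\simeq S^{17}\vee\bigvee_{i=1}^{m}S^9$ and the gauge-group splitting hold with no localization at all, so the ``away from $15$'' hypothesis is an artifact of the paper's method rather than a necessity of the statement. Your formal reduction (Theorem~\ref{generalMn-12n} followed by Proposition~\ref{Gsplit} applied to the bottom-cell inclusion of $X_2$, using $\pi_7(E_8)=\pi_8(E_8)=0$) is the same mechanism the paper uses throughout; the genuine divergence, and the improvement, lies entirely in how the vanishing of $e$ is established.
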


\subsubsection{Stable $Sp$-bundles}
Recall the Bott periodicity for symplectic groups for $q-1\leq 4r$:
\[\def\arraystretch{1.3}
\begin{array}{ c | c | c | c | c | c | c | c | c }
q~{\rm mod}~8 &  0 & 1 & 2 & 3 & 4 & 5 & 6 & 7 \\ \hline
\pi_q(Sp(r)) &0 &0 & 0 &\mathbb{Z} & \mathbb{Z}/2& \mathbb{Z}/2 &0 &\mathbb{Z}
\end{array}.
\]
\begin{proposition}\label{examplesp}
Let $M$ be an $(8n+1)$-connected closed oriented $(16n+4)$-manifold of rank $m$. For the $Sp(r)$-bundle corresponding to $k\in [M, BSp(r)]\cong \mathbb{Z}$ with $8n+1\leq 2r$, we have the homotopy equivalence 
\[
\mathcal{G}_k(M)\simeq \mathcal{G}_k(X)\times \prod_{i=1}^{m-1}\Omega^{8n+2}Sp(r),
\]
where $X_1=S^{8n+2}\cup_ge^{16n+4}$ is such that $\Sigma g=h$.
If further we localize away from $2$,
\[\hspace{3.2cm}
\mathcal{G}_k(M)\simeq \mathcal{G}_k(S^{16n+4})\times \prod_{i=1}^{m}\Omega^{8n+2}Sp(r).
\hspace{3.2cm}\Box\]
\end{proposition}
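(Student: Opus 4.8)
The plan is to recognize this statement as a direct specialization of Theorem \ref{generalMn-12n}. Write $N = 8n+2$, so that $M$ is an oriented $2N$-dimensional $(N-1)$-connected closed manifold of rank $m$ with $2N = 16n+4$, i.e.\ precisely a Type A manifold. Since $N = 8n+2 \equiv 2 \pmod 8$, we fall into the middle case ($n \equiv 1,2 \bmod 8$) of both Proposition \ref{classifySM} and Theorem \ref{generalMn-12n}, the case in which ${\rm Im}\mathbb{J} = \mathbb{Z}/2$ by Theorem \ref{AdamsQuillen}.

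Before invoking the theorem I would verify its hypotheses for $G = Sp(r)$. Theorem \ref{generalMn-12n} requires $\pi_{N-1}(Sp(r)) \cong \pi_N(Sp(r)) = 0$, that is, $\pi_{8n+1}(Sp(r)) = \pi_{8n+2}(Sp(r)) = 0$. Reading off the Bott periodicity table above, $8n+1 \equiv 1$ and $8n+2 \equiv 2 \pmod 8$ both give the zero group, provided these degrees lie in the stable range $q-1 \le 4r$. I would also need $[M, BSp(r)] \cong \mathbb{Z}$: by Lemma \ref{Gindex} this group is $\pi_{2N-1}(Sp(r)) = \pi_{16n+3}(Sp(r))$, and since $16n+3 \equiv 3 \pmod 8$ the table gives $\mathbb{Z}$ once $16n+3$ is stable. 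The stable-range condition $16n+3 \le 4r+1$ is exactly the stated hypothesis $8n+1 \le 2r$; being the requirement in the largest degree, it subsumes the stable-range conditions for $\pi_{8n+1}$ and $\pi_{8n+2}$. Thus all hypotheses hold under $8n+1 \le 2r$.

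With the bookkeeping done, the first equivalence is immediate from the $n \equiv 1,2 \bmod 8$ case of Theorem \ref{generalMn-12n}: taking $\alpha = k$,
\[\mathcal{G}_k(M) \simeq \mathcal{G}_k(X) \times \prod_{i=1}^{m-1} \Omega^{8n+2} Sp(r),\]
where $X = S^{8n+2} \cup_g e^{16n+4}$ with $\Sigma g = e$ and $e$ is given by (\ref{chimatrix}). In the stable range the looped factors $\Omega^{8n+2} Sp(r)$ may be identified with $\Omega^{8n+2} Sp$ through the relevant dimensions, which accounts for the stable notation in the statement. For the away-from-$2$ refinement I would appeal to the localized version in the same case of Theorem \ref{generalMn-12n}: since ${\rm Im}\mathbb{J} = \mathbb{Z}/2$ is killed after inverting $2$, the class $e$ becomes trivial, $X$ collapses to $S^{16n+4}$, and the splitting becomes complete, yielding
\[\mathcal{G}_k(M) \simeq \mathcal{G}_k(S^{16n+4}) \times \prod_{i=1}^{m} \Omega^{8n+2} Sp.\]

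There is essentially no genuine obstacle, since the result is a corollary of the general Type A theorem; the only point demanding care is the congruence-and-range bookkeeping, in particular checking that the single hypothesis $8n+1 \le 2r$ simultaneously guarantees the two vanishing conditions and the identification $[M,BSp(r)] \cong \mathbb{Z}$, and keeping track of which residue class mod $8$ governs each relevant homotopy group.
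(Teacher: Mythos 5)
Your proof is correct and follows exactly the paper's (implicit) argument: Proposition \ref{examplesp} is stated as a direct specialization of Theorem \ref{generalMn-12n} with $n$ replaced by $8n+2\equiv 2 \pmod 8$, with the hypotheses $\pi_{8n+1}(Sp(r))=\pi_{8n+2}(Sp(r))=0$ and $[M,BSp(r)]\cong\pi_{16n+3}(Sp(r))\cong\mathbb{Z}$ read off the Bott periodicity table in the stable range. Your verification that the single condition $8n+1\le 2r$ subsumes the stable-range requirements in all three degrees is exactly the bookkeeping the paper leaves to the reader.
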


\subsubsection{Stable $Spin$-bundles}
Recall the Bott periodicity for Spin groups for $r\geq q+2\geq 4$:
\[\def\arraystretch{1.3}
\begin{array}{ c | c | c | c | c | c | c | c | c }
q~{\rm mod}~8 &  0 & 1 & 2 & 3 & 4 & 5 & 6 & 7 \\ \hline
\pi_q(Spin(r)) & \mathbb{Z}/2& \mathbb{Z}/2 &0 &\mathbb{Z}&0 &0 & 0 &\mathbb{Z} 
\end{array}.
\]
\begin{proposition}\label{examplespin}
Let $M$ be a $(4n+1)$-connected closed oriented $(8n+4)$-manifold of rank $m$. For the $Spin(r)$-bundle corresponding to $k\in [M, BSpin(r)]\cong \mathbb{Z}$ with $r\geq 8n+5$, we have the homotopy equivalence localized away from $2$
\[
\hspace{3.1cm}\mathcal{G}_k(M)\simeq \mathcal{G}_k(S^{8n+4})\times \prod_{i=1}^{m}\Omega^{4n+2}Spin(r).
\hspace{3.1cm}\Box
\]
\end{proposition}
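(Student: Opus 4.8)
The plan is to recognize $M$ as a Type $A$ manifold and invoke Theorem \ref{generalMn-12n}. Setting $N=4n+2$, the hypothesis ``oriented $(4n+1)$-connected closed $(8n+4)$-manifold'' translates verbatim into ``oriented $(N-1)$-connected closed $2N$-manifold'', so $M$ falls under the scope of Proposition \ref{classifySM} and Theorem \ref{generalMn-12n} with the role of $n$ there played by $N$. First I would read off the relevant stable homotopy groups of $Spin(r)$ from the Bott periodicity table. Since $r\geq 8n+5$ we lie in the stable range for all degrees $\leq 8n+3$, so $\pi_q(Spin(r))\cong \pi_q(Spin)$ throughout. The three degrees that matter are $N-1=4n+1$, $N=4n+2$ and $2N-1=8n+3$. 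Reducing mod $8$: $4n+2\equiv 2$ (for $n$ even) or $6$ (for $n$ odd), whence $\pi_{4n+2}(Spin)=0$ in either case; $4n+1\equiv 1$ or $5$, so $\pi_{4n+1}(Spin)$ is $\mathbb{Z}/2$ or $0$ respectively; and $8n+3\equiv 3$, so $\pi_{8n+3}(Spin)\cong \mathbb{Z}$.

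Next I would establish the bundle classification. When $n$ is odd both $\pi_{4n+1}(Spin(r))$ and $\pi_{4n+2}(Spin(r))$ vanish integrally, so Lemma \ref{Gindex} applies directly and gives $[M,BSpin(r)]\cong \pi_{8n+3}(Spin(r))\cong \mathbb{Z}$. When $n$ is even the same computation of $\pi_{8n+3}$ holds, but the $\mathbb{Z}/2$ in degree $4n+1$ prevents a naive application of Lemma \ref{Gindex}; after localizing away from $2$ this $2$-torsion disappears, the hypotheses of Lemma \ref{Gindex} are met, and only the free summand $\mathbb{Z}$ survives. This is precisely why the statement is phrased away from $2$.

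Then I would apply Theorem \ref{generalMn-12n} with $G=Spin(r)$ and $n$ replaced by $N=4n+2$, localized away from $2$ so that the vanishing hypotheses $\pi_{N-1}(G)=\pi_N(G)=0$ hold uniformly in both parities. Here the arithmetic of $N \bmod 8$ selects the favourable branch. For $n$ odd, $N\equiv 6\pmod 8$ lies in the range $\{3,5,6,7\}$ where ${\rm Im}\,\mathbb{J}=0$, giving the clean splitting $\mathcal{G}_k(M)\simeq \mathcal{G}_k(S^{2N})\times \prod_{i=1}^m\Omega^N G$ already integrally. For $n$ even, $N\equiv 2\pmod 8$ lies in the range $\{1,2\}$ where ${\rm Im}\,\mathbb{J}=\mathbb{Z}/2$, but the ``localization away from $2$'' clause of Theorem \ref{generalMn-12n} produces the same clean splitting. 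In both cases, substituting $2N=8n+4$ and $N=4n+2$ yields the asserted equivalence.

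The main obstacle is the $\mathbb{Z}/2\subset{\rm Im}\,\mathbb{J}$ appearing when $n$ is even (equivalently $N\equiv 2\pmod 8$). This single $2$-torsion class is what both obstructs the integral classification via Lemma \ref{Gindex} and forces the attaching map of the top cell of $\Sigma M$ to be potentially nontrivial, so that $\Sigma M$ need not split off an $S^{2N+1}$ integrally. Localizing away from $2$ neutralizes exactly this class, after which the argument runs uniformly; I would emphasize, finally, that for $n$ odd no localization is in fact needed and the decomposition holds integrally.
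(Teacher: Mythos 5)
Your proposal is correct and follows exactly the route the paper intends: the paper states this proposition without proof, as an immediate consequence of the Bott periodicity table for $Spin(r)$ in the stable range $r\geq 8n+5$ together with Lemma \ref{Gindex} and Theorem \ref{generalMn-12n} applied with $N=4n+2$, which is precisely your argument. Your parity analysis is a welcome sharpening the paper leaves implicit: for $n$ odd ($N\equiv 6 \bmod 8$, so $\pi_{4n+1}(Spin)=\pi_{4n+2}(Spin)=0$ and ${\rm Im}\,\mathbb{J}=0$) the decomposition holds integrally, while for $n$ even the class $\pi_{4n+1}(Spin)\cong\mathbb{Z}/2$ and $N\equiv 2 \bmod 8$ are exactly what force the localization away from $2$, both for the classification $[M,BSpin(r)]\cong\mathbb{Z}$ and for the splitting.
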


\section{Sphere bundles over spheres and their gauge groups}\label{spherebundlesec}
In this section, we consider the gauge groups over the total space of a spherical bundle over a sphere
$S^q\stackrel{i}{\longrightarrow} E\stackrel{\pi}{\longrightarrow} S^n$,
which admits a cross section, i.e., there exists a map $s: S^n\rightarrow E$ such that $\pi \circ s={\rm id}$.
By \cite{JW54} we know that $E$ is a $3$-cell $CW$-complex with one $q$-cell, one $n$-cell and one $(q+n)$-cell.
As before we need to study the homotopy structure of $\Sigma E$ and then apply Proposition \ref{Gsplit}.

\subsection{Suspension splittings of sphere bundles over spheres with a section}
Let us suppose $n$, $q\geq 2$. Then we have a homotopy commutative diagram 
\[
\begin{tikzcd}
S^q \ar[hook]{r} \ar[equal]{d}  & E_{q+n-1} \ar{r} \ar[hook]{d}{j} & S^n \ar[equal]{d} \ar[bend left, dashed]{l}[swap]{s} \\
S^q \ar{r}     &   E \ar{r}{\pi}   & S^n \ar[bend left]{l}{s},
\end{tikzcd}
\]
where the first row is the cofibration for the $(q+n-1)$-skeleton $E_{q+n-1}$ of $E$, and the section $s$ factors through the injection $j$ by the cellular approximation. Hence, we have the homotopy equivalence $E_{q+n-1}\simeq S^q \vee S^n$, and $E\simeq (S^q\vee S^n)\cup_h e^{q+n}$ with the attaching map $h$ for the top cell.
We then also have another homotopy commutative diagram of cofibrations
\begin{eqnarray}\label{defeta}
\begin{tikzcd}
\ast \ar{r} \ar{d}  & S^n \ar[equal]{r} \ar[hook]{d}  &S^n \ar{d}[swap]{s} \\
S^{q+n-1} \ar{r}{h} \ar[equal]{d}  & S^q\vee S^n \ar{d} \ar{r}  & E \ar{d}[swap]{p}  \ar[bend right]{u}[swap]{\pi}\\
S^{q+n-1} \ar{r}{\eta}  &S^q \ar{r}  & X,
\end{tikzcd}
\end{eqnarray}
which defines $\eta$ as one of the components of $h$, the complex $X$ as the homotopy cofibre of $\eta$, and $p$ as the induced map. Since $\Sigma E$ is a co-$H$-space, we see that the suspension of the last column of Diagram (\ref{defeta}) splits, i.e., $\Sigma E\simeq S^{n+1}\vee \Sigma X$.
On the other hand, we can also get the suspension splitting of $\Sigma E$ using Thom complexes. Recall that for the spherical bundle $(E, \pi)$, the \textit{Thom complex} ${\rm Th}(E)$ of $(E, \pi)$ is just the homotopy cofibre of $\pi$ and we have the homotopy cofibre sequence
\[E\stackrel{\pi}{\rightarrow} S^n\rightarrow {\rm Th}(E) \stackrel{g}{\rightarrow} \Sigma E \stackrel{\Sigma \pi}{\rightarrow} S^{n+1},\]
which defines the map $g$.
Since $\Sigma \pi\circ \Sigma s\simeq {\rm id}$, we then obtain $\Sigma E\simeq S^{n+1} \vee {\rm Th}(E)$.
The complexes $\Sigma X$ and ${\rm Th}(E)$ are related. Indeed, by the above arguments the composition map
\[
T: ~S^{n+1}\vee {\rm Th}(E)\stackrel{\Sigma s\vee g}{\rightarrow} \Sigma E \stackrel{\mu^\prime}{\rightarrow} \Sigma E\vee \Sigma E \stackrel{\Sigma \pi \vee \Sigma p}{\rightarrow} S^{n+1}\vee \Sigma X,
\]
is a homotopy equivalence, where $\mu^\prime$ is the co-multiplication. Let $p_2: S^{n+1}\vee \Sigma X\rightarrow\Sigma X$ and $i_2:  {\rm Th}(E)\rightarrow S^{n+1}\vee {\rm Th}(E)$ be the canonical projection and injection respectively.
Then it is not hard to show that the composition 
\[
H^\ast(\Sigma X)\stackrel{{p_2}^\ast}{\rightarrow}H^\ast(S^{n+1}\vee {\rm Th}(E))\stackrel{T^\ast}{\rightarrow} H^\ast(S^{n+1}\vee {\rm Th}(E))\stackrel{i_2^\ast}{\rightarrow}H^\ast( {\rm Th}(E))
\] 
is an isomorphism.
Hence the composition ${\rm Th}(E)\stackrel{g}{\rightarrow}\Sigma E\stackrel{\Sigma p}{\rightarrow} \Sigma X$ is a homotopy equivalence.

\subsection{Sphere bundles of plane bundles over spheres}\label{subsec: spherebundle2}
Suppose $E$ is the sphere bundle of an oriented vector bundle over $\mathbb{R}$. In this case we can get a better description, that is, we can desuspend the homotopy equivalence ${\rm Th}(E)\simeq \Sigma X$. Also notice that we do not need the restriction on $q$ and $n$ if we use Thom complexes to study $\Sigma E$. The material in this subsection mainly follows the work of James and Whitehead on spheres bundles over spheres \cite{JW54}.

Indeed, as the sphere bundle of a $(q+1)$-plane bundle, $E$ is determined by a clutching function 
$\zeta \in \pi_{n-1}(SO(q+1))$,
and (Milnor; Proposition $29$ of \cite{FB15})
\begin{equation}\label{planeThom}
{\rm Th}(E)\simeq S^{q+1}\cup_{J(\zeta)} e^{q+n+1},
\end{equation}
where $J: \pi_{n-1}(SO(q+1))\rightarrow \pi_{n+q}(S^{q+1})$ is the $J$-homomorphism.
Further, the clutching function $\zeta$ is the image of the class of the identity under $\partial: \pi_{n}(S^n)\rightarrow \pi_{n-1}(SO(q+1))$, which is the connecting homomorphism of the principal bundle $SO(q+1)\rightarrow P(E)\rightarrow S^n$ of $E$.
Then by the following commutative diagram of the homotopy groups of three fibrations
\[
\begin{tikzcd}
& \pi_n(S^n) \ar{d}{\partial}   \ar[equal]{r}   & \pi_n(S^n) \ar{d}{\delta=0}\\
\pi_{n-1}(SO(q)) \ar{r}{j_\ast} & \pi_{n-1}(SO(q+1)) \ar{d} \ar{r} & \pi_{n-1}(S^q) \ar{d}{i_\ast} \\
&\pi_{n-1}(P(E))\ar{r}  &\pi_{n-1}(E),
\end{tikzcd}
\]
there exists some $\xi\in \pi_{n-1}(SO(q))$ such that $j_\ast(\xi)=\zeta$. Hence by (\ref{planeThom}) and  Diagram (\ref{JSigmadiagram}), we have 
\[
{\rm Th}(E) \simeq S^{q+1}\cup_{Jj_\ast(\xi)} e^{q+n+1} \simeq S^{q+1}\cup_{\Sigma J(\xi)} e^{q+n+1}
\simeq \Sigma (S^q\cup_{J(\xi)} e^{q+n}).
\]
Indeed by the discussion of Section $3$ of \cite{JW54}, we actually can choose $\eta$ in Diagram (\ref{defeta}) to be $J(\xi)$ and then 
$X\simeq S^q\cup_{J(\xi)} e^{q+n}$.
\subsection{Homotopy decompositions of gauge groups of sphere bundles}
In this subsection we prove Theorem \ref{spheregaugeintro}, for which we need a technical lemma.
Let $\mathbb{R}^{q+1}\rightarrow V\rightarrow S^n$ be an oriented $(q+1)$-plane bundle with the associated sphere bundle $S^{q}\rightarrow E\rightarrow B$. James and Whitehead \cite{JW54} have theoretically classified the homotopy type of sphere bundles of oriented vector bundles over spheres with cross sections. Moreover, they showed a sufficient condition for when $E\simeq S^q\times S^n$ without the assumption of the existence of a cross section.

\begin{lemma}[Theorem $1.11$ of \cite{JW54}]\label{spheresplit3}
Suppose $J(\zeta)=0$ and $n\leq 2q-1$. Then $E=S(V)\simeq S^q\times S^n$.
\end{lemma}
\begin{proof}
The proof here is due to the work of James and Whitehead, but partly differs from theirs since we do not apply their classification theorem. We try to organize the details of the proof since many of them are classical and interesting, and we  should also notice that the statement of the lemma is a little stronger than the theorem of James and Whitehead.  

The first step is to prove that the associated sphere bundle 
\begin{equation}\label{SVbundle}
S^{q}\stackrel{i}{\longrightarrow} E\stackrel{\pi}{\longrightarrow} S^{n},
\end{equation}
of $V$ admits a cross section, which is equivalent to proving that for the standard principal bundle
$
SO(q)\stackrel{j}{\longrightarrow} SO(q+1) \stackrel{p}{\longrightarrow} S^q,
$
we have $p_\ast(\zeta)=0$ where $\zeta\in \pi_{n-1}(SO(q+1))$ is the clutching function of $V$, that is, $j_\ast(\xi)=\zeta$ for some $\xi\in \pi_{n-1}(SO(q))$ and then $V$ is indeed a fibre bundle of $SO(q)$.
For this purpose, we recall a result of G. W. Whitehead (Theorem $5.1$ of \cite{Whitehead50}). 
Consider $f: S^{n-1} \times S^{m} \rightarrow S^{q}$ with $n+m\leq 3q-1$. Its restriction map $(\alpha,\beta): S^{n-1}\vee S^{m}\hookrightarrow S^{n-1} \times S^{m} \rightarrow S^{q}$ is called \textit{the type of $f$}. Whitehead showed that $H\circ F (f)\simeq (-1)^{q+1} (\alpha\ast \beta)$, 
where $F(f)$ is the Hopf construction of $f$, $H$ is the Hopf invariant and $\alpha\ast \beta$ is the join of the involved maps. In other words,
there exists a commutative diagram up to the sign $(-1)^{q+1}$
\[
\begin{tikzcd}
\lbrack S^{n-1}\times S^{m}, S^{q}\rbrack \ar{r}{F} \ar{d} &
\pi_{n+m}(S^{q+1}) \ar{d}{H}\\
\pi_{n-1}(S^{q}) \oplus \pi_{m}(S^{q})
\ar{r}{\ast}  &  \pi_{n+m}(S^{2q+1}). 
\end{tikzcd}
\]
Since $\zeta \in \pi_{n-1}(SO(q+1))$ determines a map $\tilde{\zeta}: S^{n-1}\times S^{q}\rightarrow S^{q}$ of type $(p_\ast(\zeta), \iota_{q})$, by Whitehead's definition of the $J$-homomorphism we have
\[H\circ J(\zeta)\simeq H\circ F (\tilde{\zeta})\simeq (-1)^{q+1} (p_\ast(\zeta)\ast \iota_{q}).\]
By assumption $J(\zeta)=0$, we then have $\Sigma^{q+1}p_\ast(\zeta)\simeq p_\ast(\zeta)\ast \iota_{q}=0$,
which implies $p_\ast(\zeta)=0$ by Freudenthal suspension theorem if $n\leq 2q-1$. Hence we complete the first step that the sphere bundle (\ref{SVbundle}) admits a section, and then as before there exists a homotopy equivalence $E\simeq (S^q\vee S^n)\cup_h e^{q+n}$.

The next step is to show that the fibre inclusion $i$ of (\ref{SVbundle}) admits a left homotopy inverse, for which we need to show that $\eta: S^{q+n-1}\rightarrow S^q\vee S^n \rightarrow S^q$
is null homotopic. Notice that there is the commutative diagram 
\[
\begin{tikzcd}
\pi_{q+n-1}(S^q) \ar[hook]{r}{i_\ast} \ar[hook]{dr}{j_{1\ast}}  & \pi_{q+n-1}(E) \ar[two heads]{r}{\pi_\ast} & \pi_{q+n-1}(S^n) \\
\pi_{q+n-1}(S^q) \ar[equal]{u} & \pi_{q+n-1}(S^q\vee S^n) \ar{l} \ar{u}{j_{2\ast}},
\end{tikzcd}
\]
where $j_1$ and $j_2$ are inclusions, and the first row is a short exact sequence since the bundle (\ref{SVbundle}) admits a cross section.
Then $i_\ast(\eta)\simeq (j_2\circ j_1)_\ast(\eta)\simeq (j_2)_\ast(h)\simeq 0$ implies that $\eta\simeq 0$. Hence we can form the homotopy commutative diagram 
\[
\begin{tikzcd}
& S^{q+n-1} \ar{d}{h} \ar{dr}{\eta=0} & \\
S^q\ar[hook]{r} \ar{dr}{i} & S^q\vee S^n \ar{d} \ar{r} & S^q\\
& E\ar[dashed]{ru}{r}
\end{tikzcd}
\]
that defines a map $r$ such that $r \circ i\simeq {\rm id}$ since the column is a cofibration sequence. The second step is then completed. 

Now the lemma follows by applying the Whitehead theorem to the map $(r,\pi): E\longrightarrow S^q\times S^n$.
\end{proof}
We can now prove Theorem~\ref{spheregaugeintro}.
\begin{proof}[Proof of Theorem~\ref{spheregaugeintro}]
Since for the homotopy cofibration $S^n\stackrel{s}{\rightarrow}E\stackrel{p}{\rightarrow} X$ appeared in Diagram \ref{defeta} $s$ admits a left inverse $\pi$, there is a short exact sequence of sets $[X, BG]\hookrightarrow [E,BG]\twoheadrightarrow [S^{n}, BG]$.
Hence $[E, BG]\cong[X, BG]$ by the assumption $\pi_{n-1}(G)=0$. The proof of the two decompositions in the theorem is then similar to that of Proposition \ref{Gsplit} based on our discussions on the homotopy type of $\Sigma E$ in Subsection \ref{subsec: spherebundle2} for the first one, and Lemma \ref{spheresplit3} with the fact that $\Sigma (S^q\times S^n)\simeq S^{q+1}\vee S^{n+1}\vee S^{q+n+1}$ for the second one.
\end{proof}

\section{Homotopy decompositions of $E$-gauge groups over $(n-2)$-connected $2n$-manifolds}\label{other2nsec}
In this section, we study the gauge groups over other types of closed oriented $2n$-manifolds in the spirit of Section \ref{complexsec}. We only consider principal bundles of three exceptional Lie groups. The material in this section heavily relies on various computational techniques in unstable homotopy theory.

\subsection{Homotopy of suspended $(n-2)$-connected $2n$-manifolds}
In this subsection, we consider some types of closed oriented $2n$-manifolds ($n\geq4$) which admit a cell decomposition of the form
\begin{equation}\label{otherM}
M\simeq \bigvee_{i=1}^{m} S^{n-1} \cup e^{n+1}_{(1)}\cup e^{n+1}_{(2)}\ldots \cup e^{n+1}_{(m)}\cup e^{2n}.
\end{equation}

By the method in Section \ref{complexsec}, we can determine the homotopy type of the $(n+1)$-skeleton $M_{n+1}$ of $M$ through the homotopy cofibre sequence
$\bigvee_{i=1}^{m} S_j^{n}\stackrel{f}{\rightarrow} \bigvee_{i=1}^{m} S_i^{n-1} \rightarrow M_{n+1}$.
Since $\pi_n(\bigvee_{i=1}^{m} S^{n-1})\cong \bigoplus_{i=1}^{m}\mathbb{Z}/2$, the homotopy type of $M_{n+1}$ is determined by an $(m\times m)$ matrix $C=(c_{i,j})$ over the field $\mathbb{Z}/2$ where $c_{i,j}$ represents the component of the attaching map from $S^{n}_j$ to $S_i^{n-1}$. As in Section \ref{complexsec}, performing row-addition transformations will not change the homotopy type of $M_{n+1}$. Also, the row-multiplying transformations are valid since we work over $\mathbb{Z}/2$. Hence, we can do all row operations. Indeed we can also apply column operations.

\begin{lemma}\label{columnn-1}
Under above assumptions let ${\rm rank} (C)=c$. Then 
\[M_{n+1}\simeq \bigvee_{i=1}^{c}\Sigma^{n-3}\mathbb{C}P^{2}\vee\bigvee_{j=1}^{m-c}(S^{n-1}\vee S^{n+1}).\]
\end{lemma}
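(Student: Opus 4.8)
The plan is to encode the attaching data of the skeleton $M_{n+1}$ by the matrix $C$ over $\mathbb{Z}/2$, reduce $C$ to its rank normal form by elementary operations that each preserve the homotopy type of $M_{n+1}$, and then read off the wedge decomposition by identifying the surviving two-cell subcomplexes. First I would pin down the homotopy group into which the attaching map lands. Since $n\geq 4$ we have $n-1\geq 3$, so $\pi_n(S^{n-1})\cong \mathbb{Z}/2$ generated by $\eta$ (the iterated suspension of the Hopf map), and the lowest Whitehead product $[\iota_{n-1},\iota_{n-1}]$ lives in dimension $2n-3>n$; hence $\pi_n(\bigvee_{i=1}^m S_i^{n-1})\cong \bigoplus_{i=1}^m \mathbb{Z}/2$, as asserted. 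This justifies representing the attaching map $f:\bigvee_j S_j^n\to \bigvee_i S_i^{n-1}$ of the top $(n+1)$-cells by the matrix $C=(c_{i,j})$, where $c_{i,j}\in\mathbb{Z}/2$ records the component of $f$ from $S_j^n$ to $S_i^{n-1}$.

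Next I would establish that both row and column operations on $C$ preserve the homotopy type of $M_{n+1}$. The row-addition maps $\phi_{a,b}$ of Section \ref{complexsec} apply verbatim, and since we now work over the field $\mathbb{Z}/2$, where the only nonzero scalar is $1$, all elementary row operations are available. The column operations are realized on the other side: a self-homotopy-equivalence of the source $\bigvee_j S_j^n$ (built from a pinch-and-fold construction dual to $\phi_{a,b}$) precomposes with $f$, and since the mapping cone is unchanged under precomposition by an equivalence of the domain, this realizes column-addition on $C$ without altering $M_{n+1}$. This is precisely the feature distinguishing the present situation from Section \ref{complexsec}: here the columns index independent source spheres, namely the boundaries of distinct top cells, rather than distinct stable summands of a single attaching map, so reparametrizing the source is legitimate.

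With both families of operations in hand, Gaussian elimination over the field $\mathbb{Z}/2$ reduces $C$ to the rank normal form $\left(\begin{smallmatrix} I_c & 0 \\ 0 & 0 \end{smallmatrix}\right)$ with $c=\operatorname{rank}(C)$. In this normal form the $j$-th top cell (for $j\leq c$) is attached by $\eta$ to exactly the sphere $S_j^{n-1}$ and to no other, giving the two-cell complex $S^{n-1}\cup_\eta e^{n+1}\simeq \Sigma^{n-3}\mathbb{C}P^2$; the remaining $m-c$ spheres receive no attaching map and survive as $S^{n-1}$, while the remaining $m-c$ top cells are attached trivially and become $S^{n+1}$. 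Because each attaching map now hits a single target sphere, the mapping cone splits as the wedge
\[
M_{n+1}\simeq \bigvee_{i=1}^{c}\Sigma^{n-3}\mathbb{C}P^{2}\vee\bigvee_{j=1}^{m-c}(S^{n-1}\vee S^{n+1}),
\]
as claimed. The step I expect to require the most care is the geometric realization of the column operations together with the verification that, after reduction, the several attaching maps are genuinely disjoint, each cell attaching to a single sphere, so that the cone really splits as an honest wedge rather than merely sharing the correct homology.
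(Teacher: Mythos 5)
Your proposal is correct and takes essentially the same approach as the paper: the paper likewise realizes nonsingular matrices $P, Q \in \mathrm{M}_{m\times m}(\mathbb{Z}/2)$ as self-maps of the source and target wedges, deduces an equivalence of cofibres (via the five lemma on homology), chooses $P, Q$ so that $PCQ^{-1}$ is diagonal of rank $c$, and reads off the splitting from $S^{n-1}\cup_{\eta} e^{n+1}\simeq \Sigma^{n-3}\mathbb{C}P^{2}$. Your extra details (the Hilton-type argument that $\pi_n(\bigvee S^{n-1})\cong\bigoplus\mathbb{Z}/2$, and the explicit pinch-and-fold realization of column operations on the source) simply make explicit what the paper leaves implicit.
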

\begin{proof}
For any non-singular matrices $P$ and $Q\in {\rm M}_{m\times m}(\mathbb{Z}/2)$, we can construct maps to fill in the following commutative diagram
\[
 \begin{tikzcd}
\bigvee_{j=1}^{m} S^{n} \ar{d}{Q}  \ar{r}{C} & \bigvee_{i=1}^{m} S^{n-1}  \ar[r] \ar{d}{P} & M_{n+1} \ar{d}{\phi}[swap]{\simeq} \\
\bigvee_{j=1}^{m} S^{n} \ar{r}{PCQ^{-1}}  & \bigvee_{i=1}^{m} S^{n-1}  \ar[r]   & M_{n+1}^\prime\simeq M_{n+1},
 \end{tikzcd}
\]
where either row is a homotopy cofibre sequence, and the homotopy equivalence is obtained by applying the Five lemma on the associated long exact sequences of homologies. Hence, we can choose $P$ and $Q$ such that $PCQ^{-1}$ is a diagonal matrix of rank ${\rm rank} (C)$. Since $\pi_n(S^{n-1})=\langle \eta_{n-1}\rangle$ generated by the first Hopf map, the homotopy cofibre of $\eta_{n-1}$ is homotopy equivalent to $\Sigma^{n-3}\mathbb{C}P^{2}$ and then the lemma follows.
\end{proof}

Since we want to study the homotopy of gauge groups through Proposition \ref{Gsplit}, we need to study the homotopy type of $\Sigma M$, which is determined by the top attaching map lying in the image of the composition 
\begin{eqnarray*}
&&\pi_{2n-1}(M_{n+1})\\
&\cong& \pi_{2n-1}\big(\bigvee_{i=1}^{c}\Sigma^{n-3}\mathbb{C}P^{2}\vee\bigvee_{j=1}^{m-c}(S^{n-1}\vee S^{n+1})\big) \\
&\stackrel{}{\cong}& \bigoplus_{i=1}^{c}\pi_{2n-1}(\Sigma^{n-3}\mathbb{C}P^{2}) \oplus\bigoplus_{i=1}^{m-c}\pi_{2n-1}(S^{n-1})
\oplus\bigoplus_{i=1}^{m-c}\pi_{2n-1}(S^{n+1})\oplus W \\
&\stackrel{\Sigma}{\longrightarrow}& \bigoplus_{i=1}^{c}\pi_{2n}(\Sigma^{n-2}\mathbb{C}P^{2}) \oplus\bigoplus_{i=1}^{m-c}\pi_{2n}(S^{n})
\oplus\bigoplus_{i=1}^{m-c}\pi_{2n}(S^{n+2})\\
&\stackrel{}{\hookrightarrow}&  \pi_{2n}(\Sigma M_{n+1}),
\end{eqnarray*}
where the second isomorphism, the factor $W$ and the last inclusion are determined by the Hilton-Milnor theorem (see Section \MyRoman{11}.6 of \cite{Whitehead78} for instance).

\subsection{Homotopy types of suspended $4$-connected $12$-manifolds}
In this subsection, we study the homotopy type of $\Sigma M$ (\ref{otherM}) when $n=6$.
In order to study the suspension of the top attaching map of $M$, we need to compute the homotopy groups of suspended complex projective planes.
\begin{lemma}\label{suspensionsmashCP2split}
There is an isomorphism of homotopy groups
\[
\pi_{i}(S^6\wedge \mathbb{C}P^2 \wedge \mathbb{C}P^2)\cong \pi_{i}(\Sigma^8 \mathbb{C}P^2)
\]
for each $0\leq i\leq 11$.
\end{lemma}
\begin{proof}
First notice that at any odd prime $p$ $S^6\wedge \mathbb{C}P^2 \wedge \mathbb{C}P^2\simeq S^{10}\vee S^{12}\vee S^{12}\vee S^{14}$, $\Sigma^8 \mathbb{C}P^2\simeq S^{10}\vee S^{12}$, and their $i$-th homotopy groups are trivial for each $0\leq i\leq 11$. Hence it remains to consider the $2$-torsion parts of the involved homotopy groups, and they are already in the stable range by Freudenthal suspension theorem.

In the proof of Lemma $3.10$ of \cite{Wu2003} Wu constructed a $2$-local homotopy cofibration
$S^6\stackrel{f}{\rightarrow} \mathbb{C}P^2 \wedge \mathbb{C}P^2 \rightarrow Z$ such that the $7$-skeleton $Z_7$ of $Z$ is homotopy equivalent to $\Sigma^2\mathbb{C}P^2$.
It follows that there is the stable homotopy fibration
$S^{12}\stackrel{}{\rightarrow} S^6\wedge \mathbb{C}P^2 \wedge \mathbb{C}P^2\rightarrow \Sigma^6Z$. By the exact sequence of homotopy groups of fibration, it implies that $\pi_i(S^6\wedge \mathbb{C}P^2\wedge \mathbb{C}P^2)\cong \pi_i(\Sigma^6Z)$ for each $0\leq i\leq 11$. On the other hand, since $Z_7\simeq \Sigma^2\mathbb{C}P^2$, we have $\pi_i(\Sigma^6Z)\cong \pi_{i}(\Sigma^8\mathbb{C}P^2)$. The lemma then follows.
\end{proof}

We now can get the homotopy information of the top attaching map that we need for the homotopy of gauge groups.
\begin{lemma}\label{n=6,n-1}
$\Sigma: \pi_{11}(\Sigma^3\mathbb{C}P^2)\rightarrow \pi_{12}(\Sigma^4\mathbb{C}P^2)\cong \mathbb{Z}/2$ is an epimorphism.
\end{lemma}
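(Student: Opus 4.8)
The plan is to analyze the suspension homomorphism $\Sigma \colon \pi_{11}(\Sigma^3\mathbb{C}P^2)\to \pi_{12}(\Sigma^4\mathbb{C}P^2)$ by placing it inside the $EHP$-sequence of Lemma~\ref{EHP} applied to the space $W=\Sigma^3\mathbb{C}P^2$. Since $\mathbb{C}P^2=S^2\cup_{\eta}e^4$, the space $\Sigma^3\mathbb{C}P^2$ is $4$-connected, so taking $n=5$ in the $EHP$-sequence gives the portion
\[
\pi_{11}(W)\stackrel{E}{\longrightarrow}\pi_{12}(\Sigma W)\stackrel{H}{\longrightarrow}\pi_{12}(\Sigma W\wedge W)\stackrel{P}{\longrightarrow}\pi_{10}(W),
\]
where $W=\Sigma^3\mathbb{C}P^2$, $\Sigma W=\Sigma^4\mathbb{C}P^2$, and $\Sigma W\wedge W=\Sigma^4\mathbb{C}P^2\wedge \Sigma^3\mathbb{C}P^2=S^7\wedge(\mathbb{C}P^2\wedge\mathbb{C}P^2)$. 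The target $\pi_{12}(\Sigma^4\mathbb{C}P^2)$ being $\mathbb{Z}/2$ is taken as given from the statement; to show $E=\Sigma$ is surjective, by exactness it suffices to show that the Hopf invariant map $H$ is zero, equivalently that every class in $\pi_{12}(\Sigma^4\mathbb{C}P^2)$ lifts to a suspension.

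\emph{First} I would identify the smash term. By Lemma~\ref{suspensionsmashCP2split} we have $S^6\wedge\mathbb{C}P^2\wedge\mathbb{C}P^2\simeq S^{12}\vee\Sigma^6 Z$ with ${\rm sk}_7(Z)\simeq\Sigma^2\mathbb{C}P^2$. Desuspending the indexing by one and resuspending, $\Sigma W\wedge W=S\wedge(S^6\wedge\mathbb{C}P^2\wedge\mathbb{C}P^2)\simeq S^{13}\vee\Sigma^7 Z$, so
\[
\pi_{12}(\Sigma W\wedge W)\cong\pi_{12}(S^{13})\oplus\pi_{12}(\Sigma^7 Z)=0\oplus\pi_{12}(\Sigma^7 Z).
\]
Because $\Sigma^7 Z$ has bottom cell in dimension $9$ (its $8$-skeleton is $\Sigma^7({\rm sk}_7 Z)=\Sigma^9\mathbb{C}P^2$, with cells in dimensions $9$ and $11$, and the next cell of $\Sigma^7 Z$ coming from $S^6$ lands in dimension $13$), the group $\pi_{12}(\Sigma^7 Z)$ is computed from the two-cell complex $\Sigma^9\mathbb{C}P^2=S^9\cup_{\eta}e^{11}$. \emph{Then} a short cofibre-sequence computation using $\pi_{12}(S^9)\cong\mathbb{Z}/24$ and $\pi_{12}(S^{11})\cong\mathbb{Z}/2$ (with the connecting maps governed by composition with $\eta$) pins down $\pi_{12}(\Sigma^7 Z)$, hence $\pi_{12}(\Sigma W\wedge W)$.

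\emph{The alternative} is to bound the domain instead: compute $\pi_{11}(\Sigma^3\mathbb{C}P^2)$ directly from the cofibre sequence $S^5\to\Sigma^3\mathbb{C}P^2\to S^7$ (using $\pi_{11}(S^5)$, $\pi_{11}(S^7)\cong\mathbb{Z}/2$, and the relevant $\pi_{10}(S^5)$ boundary) and show that the suspension of a generator hits the nonzero class of $\pi_{12}(\Sigma^4\mathbb{C}P^2)$. In practice I expect to combine both: use the cofibre sequence to exhibit a concrete generator of $\pi_{11}(\Sigma^3\mathbb{C}P^2)$ whose image under $\Sigma$ is detected to be nonzero --- for instance a class pulled back along the collapse $\Sigma^3\mathbb{C}P^2\to S^7$ mapping onto the generator $\eta\in\pi_{11}(S^7)$, whose suspension is $\eta\in\pi_{12}(S^8)$ and survives into $\pi_{12}(\Sigma^4\mathbb{C}P^2)\cong\mathbb{Z}/2$. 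The naturality square
\[
\begin{tikzcd}
\pi_{11}(\Sigma^3\mathbb{C}P^2)\ar{r}{\Sigma}\ar{d} & \pi_{12}(\Sigma^4\mathbb{C}P^2)\ar{d}\\
\pi_{11}(S^7)\ar{r}{\Sigma}{\cong} & \pi_{12}(S^8)
\end{tikzcd}
\]
induced by collapsing onto the top cell then reduces surjectivity to checking that the right vertical map hits the generator.

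\emph{The main obstacle} I anticipate is controlling the attaching-map interference: both the domain $\pi_{11}(\Sigma^3\mathbb{C}P^2)$ and the comparison with the top cell are affected by the $\eta$-attaching map of $\mathbb{C}P^2$, so I must verify that the candidate generator is not killed when passing from $S^7$-level classes back into $\Sigma^3\mathbb{C}P^2$, and that no unexpected $\pi_{11}(S^5)$-contribution obstructs the lift. Resolving this requires tracking $\eta$-multiplications in the long exact sequences of the two relevant cofibrations, where low-stem elements such as $\eta^2$ and $\eta^3$ in the stable stems enter; this bookkeeping with $\eta$-compositions is the delicate part, while the vanishing of $H$ via the connectivity of $\Sigma^7 Z$ and the surjectivity of $E$ then follow formally from Lemma~\ref{EHP}.
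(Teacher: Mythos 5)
Your overall strategy is exactly the paper's: run the EHP sequence of Lemma \ref{EHP} for $W=\Sigma^3\mathbb{C}P^2$, split $\Sigma W\wedge W\simeq S^{13}\vee\Sigma^7 Z$ via Lemma \ref{suspensionsmashCP2split}, and show the Hopf-invariant target vanishes in degree $12$. But your execution of the key step contains a dimension error that makes it fail. Since $Z$ has cells in dimensions $4$, $6$, $8$ (its $7$-skeleton is $\Sigma^2\mathbb{C}P^2$, and its top cell, coming from $\mathbb{H}P^2$, is an $8$-cell), the complex $\Sigma^7 Z$ has cells in dimensions $11$, $13$, $15$: its bottom cell is $S^{11}$, not $S^9$, and $\Sigma^7({\rm sk}_7 Z)=\Sigma^9\mathbb{C}P^2=S^{11}\cup_\eta e^{13}$, not $S^9\cup_\eta e^{11}$ (the bottom cell of $\Sigma^k\mathbb{C}P^2$ sits in dimension $k+2$). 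This matters decisively: the computation you propose for $S^9\cup_\eta e^{11}$, using $\pi_{12}(S^9)\cong\mathbb{Z}/24$ and $\pi_{12}(S^{11})\cong\mathbb{Z}/2$ with $\eta$-connecting maps, would give $\pi_{12}\cong\mathbb{Z}/12\neq 0$ (the cokernel of $\eta^3=12\nu$), and a nonzero target tells you nothing by exactness --- surjectivity of $E$ requires the image of $H$ to vanish, which you could not then conclude. With the correct cells the argument does close: $\pi_{12}(\Sigma^7 Z)\cong\pi_{12}(S^{11}\cup_\eta e^{13})$, and this group is zero because $\pi_{12}(S^{11})=\mathbb{Z}/2\{\eta_{11}\}$ surjects onto it while $\eta_{11}$ is precisely the attaching map of the $13$-cell, hence dies in the cofibre; this is the vanishing $\pi_{12}(\Sigma^{9}\mathbb{C}P^2)=0$ that the paper quotes from Proposition $8.1$ of \cite{Mukai82}.

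Your fallback route is not salvageable either. The groups $\pi_{11}(S^7)$ and $\pi_{12}(S^8)$ lie in the stable $4$-stem, which is zero --- there is no class $\eta$ there ($\eta$ lives in $\pi_8(S^7)$), so there is nothing to pull back along the collapse $\Sigma^3\mathbb{C}P^2\to S^7$. Worse, top-cell detection cannot work even in principle: by Proposition $10.5$ of \cite{Mukai82}, as used in the paper, the generator of $\pi_{12}(\Sigma^4\mathbb{C}P^2)\cong\mathbb{Z}/2$ is $i_\ast\nu_6^2$, i.e.\ it is carried by the bottom cell $S^6$, so it maps to zero under the pinch map $\Sigma^4\mathbb{C}P^2\to S^8$ and your naturality square detects nothing. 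The only working argument is the EHP one, carried out with the corrected dimensions --- which is exactly the paper's proof.
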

\begin{proof}
By Lemma \ref{suspensionsmashCP2split} and Freudenthal suspension theorem, $\pi_{12}(\Sigma^4\mathbb{C}P^2\wedge \Sigma^3\mathbb{C}P^2)\cong\pi_{12}(\Sigma^{9}\mathbb{C}P^2)$. Then by the $EHP$-sequence (Chapter \MyRoman{12} Theorem 2.2 of \cite{Whitehead78}) we have the exact sequence $\pi_{11}(\Sigma^3\mathbb{C}P^2)\stackrel{\Sigma}{\rightarrow}\pi_{12}(\Sigma^4\mathbb{C}P^2)\stackrel{H}{\rightarrow}\pi_{12}(\Sigma^{9}\mathbb{C}P^2)$. In Proposition $8.1$ and Proposition $10.5$ of \cite{Mukai82} Mukai showed that 
$\pi_{12}(\Sigma^{9}\mathbb{C}P^2)=0$ and $\pi_{12}(\Sigma^4\mathbb{C}P^2)=\{i_\ast \nu_6^2\}\cong \mathbb{Z}/2$, where $\pi_{12}(S^6)=\{\nu_6^2\}\cong \mathbb{Z}/2$ and $i$ is the injection of the bottom cell. The lemma now follows.
\end{proof}

\begin{lemma}\label{SigmaM1257}
Let $M$ be a $12$-dimensional manifold with cell decomposition of the form $M\simeq \bigvee_{i=1}^{m} S^{5} \cup e^{7}_{(1)}\cup e^{7}_{(2)}\ldots \cup e^{7}_{(m)}\cup_f e^{12}$.
Then there exists a non-negative number $c$ with $0\leq c\leq m$ such that 
\[\Sigma M\simeq \Sigma Z \vee \bigvee_{i=1}^{c-1} \Sigma^4 \mathbb{C}P^2 \vee\bigvee_{i=1}^{m-c-1} S^6\vee \bigvee_{i=1}^{m-c} S^8,\]
where $Z\simeq \big(\Sigma^3\mathbb{C}P^2 \vee S^5\big)\cup_g e^{12}$ for some $g$.
\end{lemma}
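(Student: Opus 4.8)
The plan is to analyze $\Sigma M$ through its single top cell, exactly in the spirit of Section \ref{complexsec}. Writing $M_7$ for the $7$-skeleton, the manifold is obtained by attaching one $12$-cell, so $M \simeq M_7 \cup_f e^{12}$ with $f\in\pi_{11}(M_7)$, and hence $\Sigma M \simeq \Sigma M_7 \cup_{\Sigma f} e^{13}$ with $\Sigma f\in\pi_{12}(\Sigma M_7)$. By Lemma \ref{columnn-1} there is a number $c=\mathrm{rank}(C)$ with
\[M_7 \simeq \bigvee_{i=1}^{c}\Sigma^{3}\mathbb{C}P^{2}\vee\bigvee_{j=1}^{m-c}(S^{5}\vee S^{7}),\]
so the whole problem reduces to locating $\Sigma f$ inside $\pi_{12}(\Sigma M_7)$ and then running the matrix reduction of Section \ref{complexsec} on its coordinates.

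First I would pin down where $\Sigma f$ lives. The suspended wedge summands are $\Sigma^4\mathbb{C}P^2$, $S^6$ and $S^8$, and the decisive point is that $\Sigma f$ is a suspension. Applying Lemma \ref{EHP} to $W=M_7$ gives exactness of $\pi_{11}(M_7)\xrightarrow{E}\pi_{12}(\Sigma M_7)\xrightarrow{H}\pi_{12}(\Sigma M_7\wedge M_7)$, so $\Sigma f=E f$ satisfies $H(\Sigma f)=0$; since $\Sigma M_7$ is $5$-connected the length-$\geq 3$ smash contributions vanish for dimension reasons, whence $\ker H$ is exactly the linear (non-Whitehead) part. Therefore $\Sigma f$ lands in
\[\bigoplus_{i=1}^{c}\pi_{12}(\Sigma^{4}\mathbb{C}P^{2})\oplus\bigoplus_{j=1}^{m-c}\pi_{12}(S^6)\oplus\bigoplus_{j=1}^{m-c}\pi_{12}(S^8),\]
avoiding the Whitehead summand $W$, which is precisely the composite displayed just before the statement. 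I would then insert the three group computations: Lemma \ref{n=6,n-1} gives $\pi_{12}(\Sigma^4\mathbb{C}P^2)\cong\mathbb{Z}/2$ with every class a suspension, while \cite{Toda62} gives $\pi_{12}(S^6)\cong\mathbb{Z}/2$ (generated by $\nu_6^2$) and $\pi_{12}(S^8)=0$. Thus $\Sigma f$ is recorded by a vector whose entries lie in $\mathbb{Z}/2$ on the $\Sigma^4\mathbb{C}P^2$- and $S^6$-coordinates and which vanishes on every $S^8$-coordinate.

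Next I would run the reduction. Because $\Sigma^3\mathbb{C}P^2$ and $S^5$ are suspensions, the pinch-and-fold self-equivalences $\phi_{a,b}$ of Section \ref{complexsec} are available inside the block of $\Sigma^3\mathbb{C}P^2$-summands and inside the block of $S^5$-summands, and their effect on $\Sigma f$ is row-addition on the corresponding coordinates. Working over the field $\mathbb{Z}/2$ (where the only nonzero scalar is $1$) a vector of $\mathbb{Z}/2$-entries is reduced by such operations to one having at most one nonzero entry in each block; no cross-block operation is needed. After these homotopy equivalences of $M$ the suspended attaching map is supported on at most one $\Sigma^4\mathbb{C}P^2$, at most one $S^6$, and no $S^8$. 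By the same splitting mechanism as in Corollary \ref{Zsplit}, the remaining $c-1$ copies of $\Sigma^4\mathbb{C}P^2$, the $m-c-1$ copies of $S^6$ and all $m-c$ copies of $S^8$ split off $\Sigma M$ as wedge summands; defining $Z$ to be the cofibre of the inclusion of these split-off summands into the reduced $M$ gives $Z\simeq(\Sigma^3\mathbb{C}P^2\vee S^5)\cup_f e^{12}$ and
\[\Sigma M \simeq \Sigma Z \vee \bigvee_{i=1}^{c-1}\Sigma^4\mathbb{C}P^2 \vee \bigvee_{i=1}^{m-c-1}S^6 \vee \bigvee_{i=1}^{m-c}S^8.\]

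The step I expect to be the genuine obstacle is the second one: establishing that $\Sigma f$ really sits in the torsion linear part. This matters because $W$ is not negligible here, as the smash $S^6\wedge S^6$ already contributes a free $\mathbb{Z}=\pi_{12}(S^{12})$ to $\pi_{12}(\Sigma M_7)$; without the $E/H$ argument the attaching data would not be a mere collection of $\mathbb{Z}/2$'s and the reduction could not be performed over a field. I would also take care to track the three distinct summand types separately through the maps $\phi_{a,b}$, so that operations stay within a single block, and to record the degenerate ranges $c=0$ and $c=m$, where one adopts the convention that an empty block retains no factor of the stated type in $Z$.
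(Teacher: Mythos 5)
Your proposal is correct and follows essentially the same route as the paper, whose own proof is precisely this combination: Lemma \ref{columnn-1} for the $7$-skeleton, the observation (displayed in the paper just before the $E_7$ subsection) that suspension kills the Whitehead/Hilton--Milnor cross-terms so that $\Sigma f$ lies in $\bigoplus_{i=1}^{c}\pi_{12}(\Sigma^4\mathbb{C}P^2)\oplus\bigoplus\pi_{12}(S^6)\oplus\bigoplus\pi_{12}(S^8)$, the computations $\pi_{12}(\Sigma^4\mathbb{C}P^2)\cong\mathbb{Z}/2$ (Lemma \ref{n=6,n-1}), $\pi_{12}(S^6)\cong\mathbb{Z}/2$, $\pi_{12}(S^8)=0$ from \cite{Toda62}, and the blockwise matrix reduction and splitting of Section \ref{complexsec} and Corollary \ref{Zsplit}. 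Your EHP detour just re-derives that containment (and your parenthetical locating a free $\mathbb{Z}=\pi_{12}(S^{12})$ inside $\pi_{12}(\Sigma M_7)$ is off --- the free Whitehead classes of type $[\iota_5,\iota_7]$ live in $\pi_{11}(M_7)$ and die under suspension, the cross-terms of $\pi_{12}(\Sigma M_7)$ being torsion --- but this is harmless motivational commentary, not a load-bearing step).
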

\begin{proof}
This proof is similar to the treatment in Section \ref{complexsec}. With Lemma \ref{columnn-1} ($n=6$), Lemma \ref{n=6,n-1} and the facts that \cite{Toda62} $\pi_{12}(S^8)=0$ and $\pi_{12}(S^6)\cong \mathbb{Z}/2$, we can represent the attaching map $\Sigma f$ of the top cell of $\Sigma M$ by a matrix $B$, and apply allowable matrix transformations to simplify $B$ as in Lemma \ref{trianglelemma}.
The decomposition in the lemma then can be showed similarly as Corollary \ref{Zsplit}.
\end{proof}

\subsection{Homotopy types of suspended $6$-connected $16$-manifolds}
In this subsection, we study the homotopy type of $\Sigma M$ when $n=8$. The argument is similar to that in the last subsection.

\begin{lemma}\label{n=8n-1lemma}
The homomorphism
$\Sigma: \pi_{15}(\Sigma^5\mathbb{C}P^2)\rightarrow \pi_{16}(\Sigma^6\mathbb{C}P^2)\cong \mathbb{Z}/2\oplus\mathbb{Z}/4$ 
is surjective.
\end{lemma}
\begin{proof}
First, by the similar argument at the beginning of the proof of Lemma \ref{n=6,n-1}, we have that $\pi_{16}(\Sigma^6\mathbb{C}P^2\wedge \Sigma^5\mathbb{C}P^2)\cong \pi_{16}(\Sigma^{13}\mathbb{C}P^2)$.
By the $EHP$-sequence (Chapter \MyRoman{12} Theorem 2.2 of \cite{Whitehead78}) and Proposition $8.1$ of \cite{Mukai82}, we have the exact sequence $\pi_{15}(\Sigma^5\mathbb{C}P^2)\stackrel{\Sigma}{\rightarrow}\pi_{16}(\Sigma^6\mathbb{C}P^2)\stackrel{H}{\rightarrow}\pi_{16}(\Sigma^{13}\mathbb{C}P^2)=0$.
We are left to compute $\pi_{16}(\Sigma^6\mathbb{C}P^2)$. First, by Proposition $12.2$ of \cite{Mukai82}, we have
\[
\pi_{17}(\Sigma^7\mathbb{C}P^2)=\{i_\ast \sigma_9 \eta_{16}\} \oplus \{\tilde{\nu}_{11}\nu_{14}\}\cong \mathbb{Z}/2\oplus\mathbb{Z}/4,
\]
where (following Toda \cite{Toda62}) $\pi_{17}(S^9)=\{\sigma_9\eta_{16}\}\oplus \{\bar{\nu}_9\}\oplus \{\epsilon_9\} \cong \mathbb{Z}/2\oplus\mathbb{Z}/2\oplus \mathbb{Z}/2$, and $i$ is the injection of the bottom cell, $\pi_{14}(\Sigma^7\mathbb{C}P^2)=\{\tilde{\nu}_{11}\}\cong \mathbb{Z}/24$ (Proposition $10.2$ of \cite{Mukai82}), and $\pi_{17}(S^{14})=\{\nu_{14}\}\cong \mathbb{Z}/24$. 
Then by the $EHP$-sequence, Lemma \ref{suspensionsmashCP2split} and and Freudenthal suspension theorem, we have the exact sequence
\[\pi_{18}(\Sigma^7\mathbb{C}P^2\wedge\Sigma^6\mathbb{C}P^2)\stackrel{P}{\rightarrow}\pi_{16}(\Sigma^6\mathbb{C}P^2)\stackrel{\Sigma}{\rightarrow}\pi_{17}(\Sigma^7\mathbb{C}P^2)\stackrel{H}{\rightarrow}\pi_{17}(\Sigma^7\mathbb{C}P^2\wedge\Sigma^6\mathbb{C}P^2),\]
and $\pi_{i}(\Sigma^7\mathbb{C}P^2\wedge\Sigma^6\mathbb{C}P^2)\cong \pi_{i}(\Sigma^{15}\mathbb{C}P^2)$ for $i\leq 18$.
Since by Proposition $8.1$ of \cite{Mukai82} $\pi_{18}(\Sigma^{15}\mathbb{C}P^2)=0$, $\pi_{17}(\Sigma^{15}\mathbb{C}P^2)\cong \mathbb{Z}$, and $\pi_{17}(\Sigma^7\mathbb{C}P^2)$ is a torsion group, we have $\pi_{16}(\Sigma^6\mathbb{C}P^2)\cong \pi_{17}(\Sigma^7\mathbb{C}P^2)\cong \mathbb{Z}/2 \oplus \mathbb{Z}/4$, and the lemma follows.
\end{proof}
Using the methods in Section \ref{complexsec}, and combining Lemma \ref{n=8n-1lemma} and the facts \cite{Toda62}
\[\Sigma: \oplus_{i=1}^{3}\mathbb{Z}/2\cong\pi_{15}(S^7)\hookrightarrow \pi_{16}(S^8),  \ \ \  \Sigma:  \pi_{15}(S^9)\stackrel{\cong}{\rightarrow} \pi_{16}(S^{10})\cong \mathbb{Z}/2,\]
we obtain the following lemma analogous to Lemma \ref{SigmaM1257}.
\begin{lemma}\label{SigmaM1679}
Let $M$ be a $16$-dimensional manifold with cell decomposition of the form $M\simeq \bigvee_{i=1}^{m} S^{7} \cup e^{9}_{(1)}\cup e^{9}_{(2)}\ldots \cup e^{9}_{(m)}\cup_f e^{16}$.
Then there exists a non-negative number $c$ with $0\leq c\leq m$ such that 
\[\Sigma M\simeq \Sigma Z \vee \bigvee_{i=1}^{c-4} \Sigma^6 \mathbb{C}P^2 \vee\bigvee_{i=1}^{m-c-3} S^8\vee \bigvee_{i=1}^{m-c-1} S^9,\]
where $Z\simeq \big(\bigvee_{i=1}^{2} \Sigma^5\mathbb{C}P^2\vee \bigvee_{i=1}^{3}S^7\vee S^{9}\big)\cup_g e^{16}$ for some $g$.  ~$\qqed$
\end{lemma}

\subsection{Homotopy decompositions of gauge groups over $(n-2)$-connected $2n$-manifolds}
Let us start with some homotopy information of exceptional Lie groups (Theorem \MyRoman{5}, \cite{BS58}).
\begin{eqnarray*}
&&\pi_i(E_6)=0 \ \ \ \ \ \ {\rm for}~4\leq i \leq 8,\ \ \ \ \ \ \    \pi_9(E_6)=\mathbb{Z},\\
&&\pi_i(E_7)=0 \ \ \ \ \ \ {\rm for}~4\leq i \leq 10,\ \ \ \ \ \ \pi_{11}(E_7)=\mathbb{Z},\\
&&\pi_i(E_8)=0 \ \ \ \ \ \ {\rm for}~4\leq i \leq 14,\ \ \ \ \ \ \pi_{15}(E_8)=\mathbb{Z}.\\
\end{eqnarray*}
With the above facts, the following lemma follows immediately from Lemma \ref{Gindex}.
\begin{lemma}\label{Zindex}
Let $M$ be a $4$-connected closed oriented $m$-manifold. If
\[(m, G)= (10, E_6), (12, E_7) ~{\rm or}~(16, E_8),\]
then $[M, BG]\cong \mathbb{Z}$. ~$\qqed$
\end{lemma}

We can now prove Theorem \ref{lasttheoremintro}. 
\begin{proof}[Proof of Theorem \ref{lasttheoremintro}]
We have already showed the cell structure of $M_{n+1}$ in Lemma \ref{columnn-1}.
For the $E_7$-principal bundles over $M$ when $n=6$, we have $[M, BE_7]\cong \mathbb{Z}$ by Lemma \ref{Zindex}. Then by applying Proposition \ref{Gsplit} for $m=12$ and $k=4$, and Lemma \ref{SigmaM1257} with the fact that $[\Sigma^2 \mathbb{C}P^2, E_7]=0$, 
we can show the homotopy decomposition of the $E_7$-gauge groups. Similarly, for the $E_8$-principal bundles over $M$ when $n=8$, we have $[M, BE_8]\cong \mathbb{Z}$ by Lemma \ref{Zindex}. By applying Proposition \ref{Gsplit} for $m=16$ and $k=6$, and Lemma \ref{SigmaM1679} with the fact that $[\Sigma^4 \mathbb{C}P^2, E_8]=0$, we can show the homotopy decomposition of the $E_8$-gauge groups. The homotopy decomposition of the gauge groups localized away from 2 follows immediately from the integral ones.
\end{proof}

\section{Applications to the homotopy exponent problem}\label{expsec}
In this section, we apply the results in the previous sections to study the homotopy exponent problem. For any pointed space $X$, its $p$-th \textit{homotopy exponent} is the least power of $p$ which annihilates the $p$-torsion in $\pi_\ast(X)$, and denoted by ${\rm exp}_p(X)$ or simply ${\rm exp}(X)$. We are actually interested in the upper bounds of the odd primary homotopy exponents of gauge groups.

\begin{lemma}\label{expformu1}
Suppose the triple ($M$, $G$, $p$) belongs to one of the following groups:
\begin{itemize}
\item Type $A$: $M$ is an $(n-1)$-connected closed oriented $2n$-manifold, and $G$ is a topological group with $\pi_{n-1}(G)\cong \pi_{n}(G)=0$,
\begin{itemize}
\item $n\not\equiv 0~{\rm mod}~4$, and $p$ is odd, or
\item $n\equiv 0~{\rm mod}~4$, $(p, d_{\frac{n}{4}})=1$, where $d_s$ is the denominator of $B_s/4s$ for $B_s$ the $s$-th Bernoulli number;
\end{itemize}
\item Type $B$: $M=E$ is the total space of the sphere bundle $S^q\stackrel{i}{\longrightarrow} E\stackrel{\pi}{\longrightarrow} S^n$ of an oriented real vector bundle with a cross section such that $J(\zeta)=0$, $G$ is topological group such that $\pi_{n-1}(G)=\pi_{q-1}(G)=0$ and $q=n+2k$ with $k\geq 0$;
\item Type $C$: $M$ is an $(n-2)$-connected closed oriented $2n$-manifold, and $p$ is odd. $(G, n)=(E_7, 6)$, or $(G, n)=(E_8, 8)$. 
\end{itemize}
Then in the Type $A$ and Type $C$ cases, we have for any $\alpha\in [M, BG]=\pi_{2n-1}(G)$ 
\[
{\rm exp}(\mathcal{G}_\alpha(M))\leq {\rm max}\{{\rm exp}(\mathcal{G}_\alpha(S^{2n})), {\rm exp}(G)\};
\]
and in the Type $B$ case, we have for any $\alpha\in [M, BG]=\pi_{2(n+k)-1}(G)$ 
\[
{\rm exp}(\mathcal{G}_\alpha(M))\leq {\rm max}\{{\rm exp}(\mathcal{G}_\alpha(S^{2(n+k)})), {\rm exp}(G)\}.
\]
\end{lemma}
\begin{proof}
It suffices to show that in the Type $A$ and Type $C$ cases, there exist $p$-local homotopy decompositions of the form $\mathcal{G}_\alpha(M)\simeq_{(p)} \mathcal{G}_\alpha(S^{2n})  \times \prod_{n_j}\Omega^{n_j}G$, and in the Type $B$ case $\mathcal{G}_\alpha(M)\simeq_{(p)} \mathcal{G}_\alpha(S^{2(n+k)}) \times \prod_{n_j}\Omega^{n_j}G$,
where the numbers $n_j$ depend on the case. However, these decompositions follow immediately from Theorems \ref{generalMn-12nintro}, \ref{spheregaugeintro} and \ref{lasttheoremintro}. 
\end{proof}

From Lemma \ref{expformu1}, we are leaded to consider the homotopy exponents of the gauge groups over even dimensional spheres. For that we only consider $\mathcal{G}_\alpha(S^{2n})$ for simplicity.
There is a homotopy fibre sequence
\begin{equation}\label{gaugefibforexp}
\Omega(\Omega^{2n-1}_0G)\rightarrow \mathcal{G}_\alpha(S^{2n}) \rightarrow G\stackrel{\partial_\alpha}{\rightarrow}\Omega^{2n-1}_0G\rightarrow B\mathcal{G}_\alpha(S^{2n}) \stackrel{{\rm ev}}{\rightarrow} BG,
\end{equation}
where the connecting map $\partial_{\alpha}$, by a lemma of Lang \cite{Lang73}, is identified with the Samelson product $\langle \alpha, {\rm id}_{G}\rangle$. 
This suggests that the exponents of $\mathcal{G}_\alpha(S^{2n})$ are controlled by that of $G$.
Thus we need to consider the homotopy exponents of the structure groups, and Lie groups are our main concern here. 

To set notations, let $X$ be a finite $p$-local $H$-space. Then by a classical result of Hopf, there is a rational homotopy equivalence 
\[
X\simeq_{(0)} S^{2n_1+1}\times S^{2n_2+1}\times \cdots \times S^{2n_l+1},
\]
where the index set $\mathfrak{t}(X)=\{n_1, n_2,\ldots, n_l\}$ ($n_1\leq n_2\leq \ldots\leq n_l$) is called the \textit{type} of $X$. Denote also $l(X)= n_l$, $\mathfrak{t}_i(X)=\{k\in \mathfrak{t}(X)~|~k\equiv i~{\rm mod}~(p-1)\}$.
The types of compact simply connected simple Lie groups are well known and summarised in Table \ref{tabletypelie}.
\begin{table}[!htbp]
\caption{Types of simple Lie groups}
\begin{tabular}{lp{3.7cm}|lp{3.7cm}lp{3.7cm}}
\hline
$G$      & Type  &   $G_2$        &   $1, 5$      \\ \hline      
$SU(n)$            & $1, 2, \ldots, n-1$      &   $F_4$        &   $1, 5, 7, 11$       \\ \hline
$Sp(n)$            & $1, 3, \ldots, 2n-1$          &   $E_6$        &   $1, 4, 5, 7, 8, 11$         \\ \hline
$Spin(2n)$            & $1, 3, \ldots, 2n-3, n-1$           &   $E_7$        &   $1, 5, 7, 9, 11, 13, 17$      \\ \hline
$Spin(2n+1)$            & $1, 3, \ldots, 2n-1$   &    $E_8$        &   $1, 7, 11, 13, 17, 19, 23, 29$             \\ \hline
\end{tabular}
\label{tabletypelie}
\end{table}

Due to a classical result of Serre \cite{Serre53}, it is well known that these Lie groups can be decomposed into a product of spheres at large primes, which was generalized to finite $p$-local $H$-spaces by Kumpel \cite{Kumpel72}.

\begin{theorem}\label{regulardecomH}
Let $X$ be a finite ${\rm mod}~p$ $H$-space of type $\{n_1=1, n_2,\ldots, n_l\}$. If $H^\ast(X;\mathbb{Z})$ is $p$-torsion free and $p\geq l(X)+1$, then there exists a map 
\[
f: S^{2n_1+1}\times S^{2n_2+1}\times \cdots \times S^{2n_l+1}\rightarrow X,
\]
which is a ${\rm mod}~p$ homotopy equivalence.~$\qqed$
\end{theorem}

The primes $p$ for which there exists a map $f$ as in Theorem \ref{regulardecomH} are called \textit{the regular primes} of $X$. With this terminology, Theorem \ref{regulardecomH} just claims that $X$ is $p$-regular if $p\geq l(X)+1$.

On the other hand, thanks to a construction of Gray \cite{Gray69} on a family of infinitely many elements of order $p^n$ in $\pi_\ast(S^{2n+1})$, Cohen, Neisendorfer and Moore in their famous work \cite{Cohen79, Neisen3} showed that 
\begin{equation}\label{sphereexp}
{\rm exp}(\Omega^i_0S^{2n+1})=p^n,
\end{equation}
for any $i\geq0$ and odd prime $p$. The following lemma then follows immediately.
\begin{lemma}\label{regularexplie}
Let $G$ be a Lie group in Table \ref{tabletypelie} with $p$-torsion free cohomology. Then if $p\geq l(G)+1$, we have ${\rm exp}(\Omega^i_0G)=p^{l(G)}$ for any $i\geq0$. ~$\qqed$
\end{lemma}

Combining Lemma \ref{expformu1} and Lemma \ref{regularexplie}, we can get the following proposition.
\begin{proposition}\label{expformu2}
Let $(G, M, p)$ as in Lemma \ref{expformu1}, and $G$ be a Lie group in Table \ref{tabletypelie} with $p$-torsion free cohomology. If $p\geq l(G)+1$, we have 
\[
\hspace{2.3cm}
{\rm exp}(\mathcal{G}_\alpha(M))\leq {\rm max}\{{\rm exp}(\mathcal{G}_\alpha(S^{2n})), p^{l(G)}\}\leq p^{2l(G)}. \hspace{2.3cm}\Box
\] 
\end{proposition}

In order to obtain a more refined estimate, we consider some intermediate primary decomposition of Lie groups. 
As a generalization of classical work of Mimura, Nishida and Toda \cite{MNT77}, Theriault \cite{Theriault07} showed a deep result about homotopy decompositions of the low rank Lie groups in Table \ref{tablelie2} with further applications of the homotopy exponents. 
\begin{table}[!htbp]
\caption{Some low rank Lie groups}
\begin{tabular}{l|p{3.7cm}lp{3.7cm}}
\hline 
$SU(n)$            & $n-1\leq (p-1)(p-2)$        \\ \hline
$Sp(n)$            & $2n\leq (p-1)(p-2)$          \\ \hline
$Spin(2n+1)$            & $2n\leq (p-1)(p-2)$         \\ \hline
$G_2, F_4, E_6$            & $p\geq5$       \\ \hline
$E_7, E_8$            & $p\geq7$       \\ \hline
\end{tabular}
\label{tablelie2}
\end{table}

\begin{theorem}[Theriault \cite{Theriault07}]\label{Theriaultlieexp}
Let $G$ be a Lie group in Table \ref{tablelie2}. Then there is a homotopy decomposition 
\[
G\simeq_{(p)} B_1\times B_2\times \cdots \times B_{p-1},
\]
where $B_i$ is a homotopy associative, homotopy commutative, spherically resolved $H$-space for each $1\leq i\leq p-1$. Furthermore, there exists a composition 
\[S^{2l(B_i)+1}\stackrel{c}{\rightarrow} B_i\stackrel{q}{\rightarrow} S^{2l(B_i)+1}\]
of degree $p^{r_i(G)}$ for some $r_i(G)\in \mathbb{N}$, where $c$ is a representative of a generator of $\pi_{2l(B_i)+1}(B_i)$, and $q$ is the map to the highest dimensional resolving sphere of $B_i$, and a homotopy commutative diagram of $H$-maps
\[
\xymatrix{
B_i \ar[r]^{p^{r_i(G)}}  \ar[d]^{}   & B_i\ar@{=}[d]\\
\prod\limits_{k\in \mathfrak{t}_i(G)} S^{2k+1} \ar[r]  & B_i,
}
\]
for each $1\leq i\leq p-1$. In particular, with $r(G)={\rm max}\{r_1(G), r_2(G), \ldots, r_{p-1}(G)\}$
\[\hspace{2.3cm}{\rm exp}(B_i)\leq p^{r_i(G)+l(B_i)}, \ \ \  \ \ \ \  {\rm exp}(G) \leq p^{r(G)+l(G)}.\hspace{2.3cm}\Box\]
\end{theorem}
For a given collection of sets $\mathcal{S}=\{S_1, S_2,\ldots, S_{p-1}\}$ we define $S_{pn+i}=S_i$ for any $1\leq i\leq p-1$ and $n\in \mathbb{Z}$. Equivalently, $\mathcal{S}$ is regarded as indexed by $\mathbb{Z}/(p-1)$ as in \cite{KKT13}.
\begin{proposition}\label{expformu3}
Let $(G, M, p)$ be as in Lemma \ref{expformu1}, and $G$ be a Lie group in Table \ref{tablelie2} with $p$-torsion free cohomology. Denote $l^{(n)}_i(G)=l(B_i)+l(B_{i-n})$, $r^{(n)}_i(G)=r_i(G)+r_{n-i}(G)$, 
and $s^{(n)}(G)={\rm max}\{r^{(n)}_i(G)+l^{(n)}_i(G)~|~1\leq i\leq p-1\}$.
Then if $n-1\in \mathfrak{t}(G)$, we have 
\[
{\rm exp}(\mathcal{G}_\alpha(M))\leq p^{s^{(n)}(G)}.
\]
\end{proposition}
\begin{proof}
It was shown in \cite{KKT13} that there exists a homotopy equivalence 
\[
\mathcal{G}_\alpha(S^{2n})\simeq_{(p)} \mathfrak{B}_1^\alpha\times  \mathfrak{B}_2^\alpha\times\cdots\times \mathfrak{B}_{p-1}^\alpha,
\]
where $\mathfrak{B}_{i}^\alpha$ for each $i\in\mathbb{Z}/p-1$ is the total space of a homotopy fibre sequence 
\begin{equation}\label{KKTfibre}
\Omega(\Omega^{2n-1}_0B_i)\rightarrow\mathfrak{B}_{i}^\alpha\rightarrow B_{i-n}.
\end{equation}
The proposition then follows from Theorem \ref{Theriaultlieexp} and Lemma \ref{expformu1}.
\end{proof}

There are two special cases of Proposition \ref{expformu3} for which we can obtain better upper bounds of the homotopy exponents. Denote ${\rm ord}(\alpha)$ to be the least number of powers of $p$ such that $p^{{\rm ord}(\alpha)}\alpha=0$.
\begin{proposition}\label{expformu4}
Let $(G, M, p)$ as in Lemma \ref{expformu1}, and $G$ be a Lie group in Table \ref{tablelie2} with $p$-torsion free cohomology. Suppose $n-1\in \mathfrak{t}(G)$, then
\begin{itemize}
\item[(1).] ${\rm exp}(\mathcal{G}_\alpha(M))\leq p^{l(G)+{\rm ord}(\alpha)}$ if $p\geq l(G)+1$;
\item[(2).] ${\rm exp}(\mathcal{G}_\alpha(M))= p^{l(G)}$ if $p\geq l(G)+n+1$.
\end{itemize}
\end{proposition}
\begin{proof}
For Case (1), by (\ref{gaugefibforexp}) we have the exact sequence 
\[
\pi_\ast(\Omega(\Omega_0^{2n-1}G))\rightarrow \pi_\ast(\mathcal{G}_\alpha(S^{2n}))\rightarrow {\rm Ker}(\partial_{\alpha\ast})\rightarrow 0,
\]
and it is easy to show that $p^{{\rm ord}(\alpha)} \pi_\ast(G)\subseteq  {\rm Ker}(\partial_{\alpha\ast})$.
On the other hand, we notice that under the condition $p\geq l(G)+1$, the fibration (\ref{KKTfibre}) is now a homotopy fibration over a sphere with fiber an iterated loop space of a sphere. Moreover, by Theorem $3.12$ of \cite{KKT13} we have the homotopy commutative diagram of fibre sequences for each $i\in \mathbb{Z}/p-1$
\[
\xymatrix{
\Omega(\Omega^{2n-1}_0B_i) \ar[r] \ar[d]  &\mathfrak{B}_{i}^\alpha   \ar[r] \ar[d] & B_{i-n} \ar[d]\ar[r]^{\partial_\alpha^{(i)}\ \ \ }  &\Omega^{2n-1}_0B_i \ar[d]\\
\Omega(\Omega^{2n-1}_0G) \ar[r]  &\mathcal{G}_\alpha(S^{2n})  \ar[r]  & G  \ar[r]^{\partial_\alpha \ \ \ } & \Omega^{2n-1}_0G,
}
\]
where $\partial_\alpha^{(i)}$ is the $i$-th component of $\partial_\alpha$. 
It follows that $p^{{\rm ord}(\alpha)} \pi_\ast(B_{i-n})\subseteq  {\rm Ker}(\partial_{\alpha^{(i)}\ast})$.
Then the order $|{\rm coker}(\pi_\ast(\mathfrak{B}_{i}^\alpha)\rightarrow \pi_\ast(B_{i-n}))|\leq p^{{\rm ord}(\alpha)}$. By Lemmas $2.2$ and $2.3$ of \cite{Theriault04}, we see that ${\rm exp}(\mathfrak{B}_{i}^\alpha)\leq p^{{\rm ord}(\alpha)}\cdot {\rm max}({\rm exp}(B_i),{\rm exp}(B_{n-i}))$. The inequality (1) then follows from the $p$-regularity of $G$ and Lemma \ref{expformu1}.

For Case (2), as shown in Proposition $3.14$ of \cite{KKT13}, the gauge group of a sphere is decomposed completely as $\mathcal{G}_\alpha(S^{2n})\simeq_{(p)}\prod\limits_{k\in \mathfrak{t}(G)}(S^{2k+1}\times \Omega_0^{2n}S^{2k+1})$.
The equality (2) then follows immediately from Lemma \ref{expformu1} and (\ref{sphereexp}).
\end{proof}

\begin{remark}
We close the section with several remarks.
\begin{itemize}
\item As shown in \cite{Theriault07}, $r(SU(n))=\nu_p((n-1)!)\leq  \lfloor \frac{n-2}{p-1} \rfloor$, which is much smaller than $l(SU(n))=n-1$. Moreover, since $SU(2n)\simeq_{(p)} Sp(n)\times SU(2n)/Sp(n)$, and $Spin(2n+1)\simeq_{(p)} Sp(n)$ (Harris, \cite{Harris61}), we can obtain upper bounds of the homotopy exponents of $Sp(n)$ and $Spin(2n+1)$ from that of $SU(n)$. The homotopy exponents of exceptional Lie groups are known, as summarised in Theorem $1.10$ of \cite{DT08}. 
\item Proposition \ref{expformu4} also holds for the exceptional Lie groups with special $(n ,p)$ as listed in Proposition $3.14$ of \cite{KKT13}.
\item We can work out concrete examples based on the above results. For instance, for the two cases of Type $C$, we have 
for the $E_7$-bundle with $p\geq 29$, ${\rm exp}(\mathcal{G}_k(M))= p^{17}$, and for the $E_8$-bundle with $p\geq 41$, ${\rm exp}(\mathcal{G}_k(M))= p^{29}$.
\end{itemize}
\end{remark}

\noindent\textbf{Acknowledgements.} 
The author is indebted to Haibao Duan for introducing several points of Wall's work on $(n-1)$-connected $2n$-manifolds. He would like to thank Juno Mukai for the computations of homotopy groups in Lemma \ref{n=8n-1lemma}.
He benefitted much from the class of Thomas Farrell on geometric topology at the Yau Mathematical Sciences Center of Tsinghua University.
He is also grateful to Daisuke Kishimoto and Stephen Theriault for helpful discussions.

The author sincerely thanks Stephen Theriault for proofreading this paper. 
He also wants to thank the anonymous referee most warmly for careful reading of the manuscript and numerous suggestions that have improved the exposition of this paper.


\begin{thebibliography}{10}
%
\bibitem{Adams66} J. F. Adams, {\em On the groups $J(X)$-\MyRoman{4}}, Topology \textbf{5} (1) (1966), 21-71.
%
\bibitem{AB83} M. Atiyah and R. Bott, {\em The Yang-Mills equations over Riemann surfaces}, Philos. Trans. Royal Soc. Lon. Series A, Mathematical and Physical Sciences, \textbf{308} (1983), 523-615.
%
\bibitem{BS58} R. Bott and H. Samelson, {\em Application of the theory of Morse to symmetric spaces}, Amer. J. Math. \textbf{80} (1958), 964-1029.
%
\bibitem{Cohen79} F. R. Cohen, J. C. Moore and J. A. Neisendorfer, {\em The double suspension and exponents of the homotopy group of spheres}, Ann. Math. \textbf{109} (1979), 549-565.
%
\bibitem{CM94} R. L. Cohen and R. J. Milgram, {\em The homotopy type of gauge theoretic moduli spaces}, from: ``Algebraic topology and its applications'', (G. E. Carlsson, R. L. Cohen, W. C. Hsiang, J. D. S. Jones, editors), Math. Sci. Res. Inst. Publ. \textbf{27}, Springer, New York (1994) 15-55.
%
\bibitem{CS00} M. C. Crabb and W. A. Sutherland, {\em Counting homotopy types of gauge groups}, Proc. London Math. Soc. \textbf{81} (2000), 747-768.
%
\bibitem{DT08} 
D. M. Davis and S. D. Theriault, {\em Odd-primary homotopy exponents of simple compact Lie groups}, Geom. Topol. Monographs \textbf{13} (2008), 195-201.
%
\bibitem{Donaldson86} S. K. Donaldson, {\em Connections, cohomology and the intersection forms of $4$-manifolds}, J. Differential Geom. \textbf{24} (1986) 275-341.
%
\bibitem{DW03} H. Duan and S. Wang, {\em The degrees of maps between manifolds}, Math. Z. \textbf{244} (2003), 67-89.
%
\bibitem{FB15} T. Farrell and S. Basu, {\em Introductory lectures on surgery theory}, from: ``Introduction to Modern Mathematics'', (S.-Y. Cheng, L. Ji, Y.-S. Poon, J. Xiao, L. Yang, S.-T. Yau, editors), ALM \textbf{33}, Higher Education Press, Beijing (2015) 3-108.
%
\bibitem{Freedman82} M. H. Freedman, {\em The topology of $4$-manifolds}, J. Diff. Geom. \textbf{17} (1982), 337-453.
%
\bibitem{Gottlieb72} D. H. Gottlieb, {\em Applications of bundle map theory}. Trans. Amer. Math. Soc. \textbf{171} (1972), 23-50.
%
\bibitem{Gray69} B. I. Gray, {\em On the sphere of origin of infinite families in the homotopy groups of spheres}, Topology \textbf{8} (1969), 219-232.
%
\bibitem{Haefliger61} A. Haefliger, {\em Differential imbeddings}, Bull. Amer. Math. Soc. \textbf{67} (1961), 109-112.
%
\bibitem{HK07} H. Hamanaka and A. Kono, {\em Homotopy type of gauge groups of $SU(3)$-bundles over $S^6$}, Topology Appl. \textbf{154} (2007), no. 7, 1377-1380.
%
\bibitem{HKK08} 
H. Hamanaka, S. Kaji, and A. Kono, {\em Samelson products in $Sp(2)$}, Topology Appl. \textbf{155} (2008), no. 11, 1207-1212. 
%
\bibitem{Harris61} B. Harris, {\em On the homotopy groups of the classical groups}, Ann. Math. \textbf{74} (1961), 407-413.
%
\bibitem{HKKS16} S. Hasui, D. Kishimoto, A. Kono and T. Sato, {\em The homotopy types of $PU(3)$- and $PSp(2)$-gauge groups}, Algebr. Geom. Topol. \textbf{16} (2016), 1813-1825. 
%
\bibitem{Huang19} R. Huang, {\em Homotopy of gauge groups over non-simply-connected five-dimensional manifolds}, Sci. China Math. (2019). https://doi.org/10.1007/s11425-019-9540-3
%
\bibitem{JW54} I. M. James and J. H. C. Whitehead, {\em The homotopy theory of sphere bundles over spheres (\MyRoman{1})}, Proc. London Math. Soc. \textbf{3} (4) (1954), 196-218.
%
\bibitem{KKT13} D. Kishimoto, A. Kono and M. Tsutaya, {\em ${\rm Mod}~p$ decompositions of gauge groups}, Algebr. Geom. Topl. \textbf{13} (2013), 1757-1778.
%
\bibitem{KKT14} D. Kishimoto, A. Kono and M. Tsutaya, {\em On $p$-local homotopy types of gauge groups}, Proc. Roy. Soc. Edinburgh Sect. A \textbf{144} (2014), no. 1, 149-160.
%
\bibitem{Kono91} A. Kono, {\em A note on the homotopy type of certain gauge groups}, Proc. Roy. Soc. Edinburgh Sect. $A$ \textbf{117} (1991) 295-297.
%
\bibitem{Kumpel72} P. G. Kumpel, {\em On $p$-equivalences of ${\rm mod}~p$ $H$-spaces}, Quart. J. Math. \textbf{23} (1972), 173-178.
%
\bibitem{Lang73} G. E. Lang, Jr, {\em The evaluation map and $EHP$ sequences}, Pacific J. Math. \textbf{44} (1973), 201-210.
%
\bibitem{MS19} Membrillo-Solis, Ingrid, {\em Homotopy types of gauge groups related to $S^3$-bundles over $S^4$}, Topology Appl. \textbf{255} (2019), 56-85.
%
\bibitem{MS75} J. Milnor and J. Stasheff, {\em Characteristic classes}, Ann. Math. Studies \textbf{76}, Princeton Univ. Press and Univ. Tokyo Press, Princeton, NJ (1975).
%
\bibitem{MNT77} M. Mimura, G. Nishida and H. Toda, {\em Mod $p$ decomposition of compact Lie groups}, Publ. RIMS, Kyoto Univ. \textbf{13} (1977), 627-680.
%
\bibitem{Mukai82} J. Mukai, {\em The $S^1$-transfer map and homotopy groups of suspended complex projective spaces}, Math. J. Okayama Univ. \textbf{24} (1982) 179-200.
%
\bibitem{Neisen3} J. A. Neisendorfer, {\em $3$-primary exponents}, Math. Proc. Camb. Phil. Soc. \textbf{90} (1981), 63-83.
%
\bibitem{Quillen71} D. Quillen, {\em The Adams conjecture}, Topology \textbf{10} (1) (1971), 67-80.
%
\bibitem{Serre53} J. P. Serre, {\em Groupes d'homotopie et classes des groups ab\'{e}liens}, Ann. of Math. \textbf{58} (1953), 258-294.
%
\bibitem{So16} T. L. So, {\em Homotopy types of gauge groups over non-simply-connected closed $4$-manifolds}, Glasg. Math. J. \textbf{61} (2019), no. 2, 349-371.
%
\bibitem{ST19} T. L. So and S. Theriault, {\em The suspension of a four-manifold and its applications}, preprint (2019), arXiv:1909.11129.
%
\bibitem{Theriault04} S. D. Theriault, {\em Homotopy exponents of Harper's spaces}, J. Math. Kyoto Univ. \textbf{44} (2004), no. 1, 33-42.
%
\bibitem{Theriault07} S. D. Theriault, {\em The odd primary $H$-structure of low rank Lie groups and its application to exponents}, Trans. Amer. Math. Soc. \textbf{359} (2007), 4511-4535.
%
\bibitem{Theriault10} S. Theriault, {\em Odd primary homotopy decompositions of gauge groups}, Algebr. Geom. Topol. \textbf{10} (2010), no. 1, 535-564.
%
\bibitem{Theriault17} S. Theriault, {\em Odd primary homotopy types of $SU(n)$-gauge groups}, Algebr. Geom. Topol. \textbf{17} (2017), 1131-1150.
%
\bibitem{Toda62} H. Toda, {\em Composition methods in homotopy groups of spheres}, Ann. Math. Studies \textbf{49}, Princeton Univ. Press, Princeton, N. J. (1962).
%
\bibitem{Wall62} C. T. C. Wall, {\em Classification of $(n-1)$-connected $2n$-manifolds}, Ann. Math. \textbf{75} (1962), 163-198.
%
\bibitem{West17} M. West, {\em Homotopy decompositions of gauge groups over real surfaces}, Algebr. Geom. Topol. \textbf{17} (2017), no. 4, 2429-2480. 
%
\bibitem{Whitehead50} G. W. Whitehead, {\em Generalization of the Hopf invariant}, Ann. Math. \textbf{51} (1950), 192-237.
%
\bibitem{Whitehead78} G. Whitehead, {\em Elements of homotopy theory}, Springer-Verlag, GTM 62, 1978.
%
\bibitem{Wu2003} J. Wu, {\em Homotopy theory and the suspensions of the projective plane}, Memoirs AMS \textbf{162} (2003), No. 769.
%
\bibitem{Wu55} W. T. Wu, {\em On Pontrjagin classes, \MyRoman{2}},  Scientia Sinica \textbf{4} (1955), 455-490.
\end{thebibliography}
\end{document}